\theoremstyle{plain}
\newtheorem{theorem}{Theorem}[section]
\newtheorem{proposition}[theorem]{Proposition}
\newtheorem{lemma}[theorem]{Lemma}
\newtheorem{corollary}[theorem]{Corollary}
\newtheorem{remark}[theorem]{Remark}
\theoremstyle{nonumberplain}
\newtheorem{proof}{Proof}
\numberwithin{equation}{section}
\numberwithin{table}{section}
\newcolumntype{C}{>{$}c<{$}}
\newcolumntype{L}{>{$}l<{$}}
\newcolumntype{R}{>{$}r<{$}}
\let\oldbibliography\thebibliography
\renewcommand{\thebibliography}[1]{%
  \oldbibliography{#1}%
  \small
  \setlength{\itemsep}{0pt}%
  \setlength{\parskip}{0pt}%
}
\newcommand{\lie}[1]{\mathfrak{#1}}
\newcommand{\g}{\lie{g}}
\newcommand{\h}{\lie{h}}
\newcommand{\n}{\lie{n}}
\newcommand{\lr}{\lie{r}}
\newcommand{\ld}{\lie{d}}
\newcommand{\lt}{\lie{t}}
\newcommand{\um}{\lie{u}}
\newcommand{\aff}{\lie{aff}}
\newcommand{\z}{\lie{z}}
\newcommand{\bC}{\mathbb C}
\newcommand{\bR}{\mathbb R}
\newcommand{\affR}{\aff_{\bR}}
\newcommand{\affC}{\aff_{\bC}}
\newcommand{\T}{\ensuremath{\checkmark}}
\newcommand{\X}{\ensuremath{\times}}
\DeclareMathOperator{\I}{\mathcal{I}}
\DeclareMathOperator{\ad}{ad}
\DeclareMathOperator{\im}{Im}
\DeclareMathOperator{\tr}{Tr}
\DeclareMathOperator{\rank}{rank}
\newcommand{\hook}{{\lrcorner}}
\newcommand{\HKT}{\textsc{hkt}\xspace}
\newcommand{\SKT}{\textsc{skt}\xspace}
\newcommand{\NB}{\nabla}
\newcommand{\LC}{\NB^{\textup{LC}}}
\newcommand{\Bis}{\textup{B}}
\newcommand{\cB}{c^{\Bis}}
\newcommand{\nB}{\NB^{\Bis}}
\newcommand{\TB}{T^{\Bis}}
\DeclarePairedDelimiter{\abs}{\lvert}{\rvert}
\DeclarePairedDelimiter{\Span}{\langle}{\rangle}
\begin{document}
\thispagestyle{empty}
\begin{small}
  \begin{flushright}
  IMADA-PP-2009-16\\
  CP\textsuperscript3-ORIGINS: 2009-22
  \end{flushright}
\end{small}

\bigskip

\begin{center}
  \LARGE\bfseries Invariant strong KT geometry on\\
  four-dimensional solvable Lie groups
\end{center}
\begin{center}
  \Large Thomas Bruun Madsen and Andrew Swann
\end{center}

\begin{abstract}
  A strong KT (SKT) manifold consists of a Hermitian structure whose
  torsion three-form is closed.  We classify the invariant SKT structures
  on four-dimensional solvable Lie groups.  The classification includes
  solutions on groups that do not admit compact four-dimensional quotients.
  It also shows that there are solvable groups in dimension four that admit
  invariant complex structures but have no invariant SKT structure.
\end{abstract}

\bigskip
\begin{center}
  \begin{minipage}{0.8\linewidth}
    \begin{small}
      \tableofcontents
    \end{small}
  \end{minipage}
\end{center}

\bigskip
\begin{small}\noindent
  2010 Mathematics Subject Classification: Primary 53C55; Secondary 53C30, 32M10.
\end{small}
\newpage
\section{Introduction}
\label{sec:introduction}

On any Hermitian manifold \( (M,g,J) \) there is a unique Hermitian
connection~\cite{Gauduchon:Hermitian-Dirac}, called the Bismut connection,
which has torsion a three-form.  Explicitly the Bismut connection is given
by
\begin{equation}
  \nB = \LC+\tfrac12\TB,\qquad \cB = \bigl(\TB\bigr)^\flat = -Jd\omega,
\end{equation}
where \( \omega = g(J\cdot,\cdot) \) is the fundamental two-form and \(
Jd\omega = -d\omega(J\cdot,J\cdot,J\cdot) \).  If the torsion three-form~\(
\cB \) is closed, we have a \emph{strong Kähler manifold with torsion}, or
briefly an \emph{\SKT manifold}.  The study of \SKT structures has received
notable attention over recent years, see~\cite{Fino-T:skt} for a survey and
for an approach through generalized geometry,
see~\cite{Cavalcanti:metric-reduction}.  This has been motivated partly by
the quest for canonical choices of metric compatible with a given complex
structure and partly by the relevance of such geometries to super-symmetric
theories from physics
\cite{Gates-HR:twisted,Howe-P:further,Hull-LRvUZ:gKg,Michelson-S:conformal,Strominger:superstrings}.

Kähler manifolds are precisely the \SKT manifolds with torsion three-form
identically zero.  However, most \SKT manifolds are non-Kähler.  For
example compact semisimple Lie groups cannot be Kähler since they have
second Betti number equal to zero, but any even-dimensional compact Lie
group can be endowed with the structure of an \SKT manifold, see
Appendix~\ref{sec:compact}.  The \SKT geometry of nilpotent Lie groups was
studied by Fino, Parton \& Salamon \cite{Fino-PS:SKT}, who provided a full
classification in dimension~\( 6 \).

In this paper we classify \SKT structures on four-dimensional solvable Lie
groups, showing that there are a number of new examples; see
Table~\ref{tab:4solvskt}, only the first two entries belong to the
nilpotent classification.  The greater variety and complexity of this case
is already seen from the classification results for complex structures:
Salamon \cite{Salamon:complex-nil} classified the integrable complex
structures on \( 6 \)-dimensional nilpotent Lie groups, whereas in the
solvable case there is a classification only in dimension four
\cite{Andrada-BDO:four,Ovando:4,Snow:complex-solvable}.

In dimension four, a Hermitian manifold \( (M,g,J) \) is an \SKT manifold
precisely when the associated Lee one-form \( \theta = Jd^*\omega \) is
co-closed.  When \( M \) is compact, Gauduchon~\cite{Gauduchon:torsion}
showed that, up to homothety, there is a unique such metric in each
conformal class of Hermitian metrics.  The situation for non-compact
manifolds is less clear.  Our classification includes non-compact \SKT
manifolds that admit no compact quotient, and also shows that there are
invariant complex structures that admit no compatible invariant \SKT
metric.

\paragraph*{Acknowledgements}
We thank Martin Svensson for useful conversations and gratefully
acknowledge financial support from \textsc{ctqm} and \textsc{geomaps}.

\section{Solvable Lie algebras}
\label{sec:algebra}

Since we are interested in invariant structures on a simply-connected Lie
group~\( G \), it is sufficient to study the corresponding structures on
the Lie algebra \( \g \).  To \( \g \) one associates two series of ideals:
the \emph{lower central series}, which is given by \( \g_1 = \g' = [\g,\g]
\), \( \g_k = [\g,\g_{k-1}] \) and the \emph{derived series} defined by \(
\g^1 = \g' \), \( \g^k = [\g^{k-1},\g^{k-1}] \).  The Lie algebra is
\emph{nilpotent} (resp. \emph{solvable}) if its lower (resp. derived)
series terminates after finitely many steps.

One has that \( \g^j\subseteq\g_j \), so that nilpotent algebras are
solvable.  On the other hand, consider a solvable Lie algebra~\( \g \).
Lie's Theorem applied to the adjoint representation of the complexification
\( \g_{\bC} \), gives a complex basis for \( \g_{\bC} \) with respect to
which each \( \ad_X \) is upper triangular.  One then has:

\begin{lemma}
  \label{lem:nilsolv}
  A finite-dimensional Lie algebra \( \g \) is solvable if and only if its
  derived algebra \( \g' \) is nilpotent.  \qed
\end{lemma}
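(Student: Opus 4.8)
The plan is to prove the two implications separately, observing that one direction is essentially formal while the other is the substantive use of the triangularisation recorded above.

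For the easy direction, suppose \( \g' \) is nilpotent. Then \( \g' \) is in particular solvable, while the quotient \( \g/\g' \) is abelian and hence solvable. Since an extension of a solvable algebra by a solvable algebra is solvable, \( \g \) is solvable: concretely, the derived series of \( \g \) satisfies \( \g^1 = \g' \), and from there it coincides with the derived series of \( \g' \), which terminates by hypothesis. This requires nothing beyond chasing the derived series, so I would dispose of it in a sentence.

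The hard direction is to show that solvability of \( \g \) forces \( \g' \) to be nilpotent, and here I would exploit the triangular basis furnished by Lie's theorem. First I pass to the complexification \( \g_{\bC} \), which is again solvable, and use the complex basis in which every \( \ad_X \) is upper triangular. The commutator of two upper triangular matrices is strictly upper triangular; since \( \ad \) is a Lie algebra homomorphism, \( \ad_{[X,Y]} = [\ad_X,\ad_Y] \) is strictly upper triangular for all \( X,Y\in\g_{\bC} \), and by linearity \( \ad_Z \) is strictly upper triangular for every \( Z \) in the derived algebra \( (\g_{\bC})' = (\g')_{\bC} \). In particular each such \( \ad_Z \) is nilpotent as an endomorphism of \( \g_{\bC} \), and since \( (\g_{\bC})' \) is an ideal, hence \( \ad_Z \)-invariant, the restriction of \( \ad_Z \) to \( (\g_{\bC})' \) remains nilpotent.

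At this point Engel's theorem applies: a finite-dimensional Lie algebra all of whose adjoint operators are nilpotent is itself nilpotent, so \( (\g_{\bC})' \) is nilpotent. Finally I would descend to the real form, using that nilpotency is preserved in passing between a real Lie algebra and its complexification, so that \( \g' \) is nilpotent. The main obstacle I anticipate is bookkeeping rather than conceptual: one must verify carefully that complexification commutes with forming the derived algebra, \( (\g_{\bC})' = (\g')_{\bC} \), and that the strict upper triangularity of \( \ad_Z \) on all of \( \g_{\bC} \) genuinely transfers to a nilpotent restriction on the subalgebra \( (\g_{\bC})' \), so that Engel's theorem is legitimately invoked.
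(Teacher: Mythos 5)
Your proof is correct and follows essentially the same route the paper indicates: the paper's (sketched) argument is precisely Lie's theorem applied to the adjoint representation of the complexification \( \g_{\bC} \), producing upper triangular operators \( \ad_X \), whose commutators are strictly upper triangular, so that Engel's theorem yields nilpotency of \( (\g_{\bC})' = (\g')_{\bC} \). Your handling of the easy converse via the derived series and your bookkeeping about complexification commuting with the derived algebra and lower central series are both sound.
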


\begin{remark}
  \label{rem:4dsol}
  For \( \g \) solvable of dimension four, \( \g' \) has dimension at most
  three and so is one of a known list.  Lemma~\ref{lem:nilsolv} then
  implies that \( \g' \) is either Abelian or the Heisenberg algebra \(
  \h_3 \), which has basis elements \( E_1,E_2,E_3 \) with only one
  non-trivial Lie bracket \( [E_1,E_2] = E_3 \).
\end{remark}

Identifying \( \g \) with left-invariant vector fields on \( G \), and \(
\g^* \) with left-invariant one-forms one has the relation
\begin{equation*}
  da(X,Y) = -a([X,Y])
\end{equation*}
for all \( X,Y\in\g \) and \( a\in\g^* \).  We may describe for example \(
\h_3 \) by letting \( e_1,e_2,e_3 \) be the dual basis in \( \g^* \) to \(
E_1,E_2,E_3 \) and computing \( de_1 = 0 \), \( de_2 = 0 \), \( de_3 =
e_2\wedge e_1 \).  We will use the compact notation \( \h_3 = (0,0,21) \)
to encode these relations.

Let \( \Lambda^*\g^* \) be the exterior algebra on~\( \g^* \) and write \(
\I(A) \) for the ideal in \( \Lambda^*\g^* \) generated by a subset \( A
\).  We interpret the condition for \( \g \) to be solvable dually via:

\begin{lemma}
  \label{lem:solvbase}
  A finite-dimensional Lie algebra \( \g \) is solvable if and only if
  there are maximal subspaces \( \{0\} = W_0<W_1<\dots<W_r = \g^* \) such
  that
  \begin{equation}
    \label{eq:W}
    dW_i \subseteq \I(W_{i-1})
  \end{equation}
  for each \( i \).  \qed
\end{lemma}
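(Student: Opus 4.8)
The plan is to dualise condition \eqref{eq:W} into a statement about a descending flag of subalgebras of \( \g \), and then to recognise the latter as a reformulation of solvability. To each subspace \( U \leq \g^* \) I associate its annihilator \( U^{\circ} = \{\, X \in \g : a(X) = 0 \text{ for all } a \in U \,\} \leq \g \). Given a maximal (hence complete) flag \( W_0 < \dots < W_r = \g^* \) as in the statement, I set \( V_i = W_i^{\circ} \), so that \( \g = V_0 > V_1 > \dots > V_r = \{0\} \) is a complete descending flag with \( \dim V_{i-1} - \dim V_i = 1 \) and \( r = \dim\g \).

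The first task is the purely linear-algebraic observation that a two-form \( \beta \) lies in the degree-two part of \( \I(W_{i-1}) \) precisely when \( \beta(X,Y) = 0 \) for all \( X,Y \in W_{i-1}^{\circ} = V_{i-1} \): choosing a basis of \( \g^* \) adapted to \( W_{i-1} \) and expanding \( \beta \) in monomials, one sees that \( \beta \) is a sum \( \sum_j b_j \wedge \gamma_j \) with \( b_j \in W_{i-1} \) exactly when the component of \( \beta \) involving no factor from \( W_{i-1} \) vanishes, which is the stated evaluation condition. Applying this to \( \beta = da \) for \( a \in W_i \) and using \( da(X,Y) = -a([X,Y]) \), the requirement \( dW_i \subseteq \I(W_{i-1}) \) becomes \( a([X,Y]) = 0 \) for all \( a \in W_i \) and \( X,Y \in V_{i-1} \); that is, \( [V_{i-1},V_{i-1}] \subseteq V_i \).

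It then remains to show that such a descending flag exists if and only if \( \g \) is solvable. For the \emph{if} direction, an induction on \( k \) shows \( \g^{k} \subseteq V_k \): indeed \( \g^1 = [V_0,V_0] \subseteq V_1 \), and if \( \g^{k-1} \subseteq V_{k-1} \) then \( \g^{k} = [\g^{k-1},\g^{k-1}] \subseteq [V_{k-1},V_{k-1}] \subseteq V_k \); since \( V_r = \{0\} \) the derived series terminates. Conversely, assuming \( \g \) solvable I would build the flag recursively: having produced a solvable subalgebra \( V_{i-1} \) of dimension \( \dim\g - (i-1) \), solvability forces \( [V_{i-1},V_{i-1}] \subsetneq V_{i-1} \), so I may choose a hyperplane \( V_i \subset V_{i-1} \) with \( [V_{i-1},V_{i-1}] \subseteq V_i \); then \( [V_i,V_i] \subseteq [V_{i-1},V_{i-1}] \subseteq V_i \), so \( V_i \) is again a solvable subalgebra and the recursion continues down to \( V_r = \{0\} \). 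Dualising via \( W_i = V_i^{\circ} \) returns the desired complete flag in \( \g^* \). The step needing most care is the ideal characterisation of the second paragraph — identifying membership in \( \I(W_{i-1}) \) with vanishing on the annihilator \( V_{i-1} \) — since everything else is then a routine induction; I would also verify that interpreting \emph{maximal} as a complete flag is what makes the codimension-one steps in \( V \) and \( W \) match up.
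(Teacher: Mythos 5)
Your dualisation machinery is the right tool, and its technical core is correct: membership of a two-form in \( \I(W)\cap\Lambda^2 = W\wedge\g^* \) is equivalent to vanishing on \( W^\circ\times W^\circ \), so for any subspace \( U\leqslant\g^* \) the condition \( dU\subseteq\I(W_{i-1}) \) is equivalent to \( [V_{i-1},V_{i-1}]\subseteq U^\circ \), where \( V_{i-1}=W_{i-1}^\circ \); the flag-of-subalgebras argument you build on this is also sound. The difficulty is your reading of \emph{maximal}. In the paper it does not mean that the flag is complete: as the sentence immediately following the lemma makes explicit, each \( W_i \) is the maximal subspace satisfying \eqref{eq:W} given \( W_{i-1} \) (so \( W_1 = \ker d \), and so on), and this canonical series generally has steps of dimension bigger than one --- for \( \g \) abelian it is just \( \{0\}<\g^* \), with \( r=1 \). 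Complete flags with one-dimensional steps are the paper's auxiliary filtrations \eqref{eq:rbase}, a different object, and it is those that your argument characterises. So as written you have proved a statement that is equivalent to, but not the same as, the lemma; your closing sentence acknowledges the interpretive issue but does not resolve it.

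The repair is short and in fact collapses the proof. The set of subspaces \( U \) with \( dU\subseteq\I(W_{i-1}) \) is closed under sums, so it has a unique maximal element \( W_i \), and by your own equivalence its annihilator is the minimal subspace containing \( [V_{i-1},V_{i-1}] \), namely \( [V_{i-1},V_{i-1}] \) itself. Hence by induction \( W_i^\circ = \g^i \), the \( i \)-th term of the derived series, and the condition \( W_r=\g^* \) is literally \( \g^r=\{0\} \), i.e.\ solvability; this near-tautology is presumably why the paper states the lemma without any argument. Alternatively you may keep your complete-flag statement and bridge it to the lemma in both directions: any refinement of a flag satisfying \eqref{eq:W} still satisfies \eqref{eq:W} (if \( W_{i-1}\leqslant U\leqslant U'\leqslant W_i \) then \( dU'\subseteq\I(W_{i-1})\subseteq\I(U) \)), which gets you from the canonical series to a complete flag, and conversely any flag satisfying \eqref{eq:W} is dominated termwise by the canonical maximal series, so the latter also terminates at \( \g^* \). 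Either bridge is only a few lines, but one of them must actually be written.
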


Concretely \( W_1 = \ker (d\colon \g^*\to\Lambda^2\g^*) \) and \( W_i \) is
defined inductively to be the maximal subspace satisfying~\eqref{eq:W}.  We
will sometimes find it useful to choose a filtration \( \{0\} =
V_0<V_1<\dots <V_n = \g^* \) with
\begin{equation}
  \label{eq:rbase} 
  \dim_{\bR} V_i = i\quad\text{and}\quad dV_i\subseteq
  \I(V_{i-1})\qquad\text{for each \( i \)}.
\end{equation}
One way to construct such filtrations is to refine the spaces \( W_i \),
however in some cases other choices may be possible and useful.

\section{The SKT structural equations}
\label{sec:SKT}

A left-invariant almost Hermitian structure on \( G \) is determined by an
inner product \( g \) on the Lie algebra~\( \g \) and a linear endomorphism
\( J \) of~\( \g \) such that \( J^2 = -1 \) and \( g(JX,JY) = g(X,Y) \)
for all \( X,Y\in\g \).  The \SKT condition consists of the requirement
that \( J \) be integrable and that \( dJd\omega = 0 \) where \(
\omega(X,Y) = g(JX,Y) \).  In the differential algebra, integrability of~\(
J \) may be expressed as the condition that \( d\Lambda^{1,0}\subseteq
\Lambda^{2,0}+\Lambda^{1,1} \).  If \( \g \) is four-dimensional and
solvable, we now show that there is one of two choices of possible good
bases \( \{a,Ja,b,Jb\} \) for~\( \g^* \).  We will later determine the \SKT
condition in each case.

\begin{lemma}
  \label{lem:struct}
  Let \( \g \) be a solvable Lie algebra of dimension four.  If \( (g,J) \)
  is an integrable Hermitian structure on~\( \g \) then there is an
  orthonormal set \( \{a,b\} \) in \( \g^* \) such that \( \{a,Ja,b,Jb\} \)
  is a basis for \( \g^* \) and either
  \begin{asparadesc}
  \item[Complex case:] \( \g \) has structural equations
    \begin{equation}
      \label{eq:strC}
      \begin{gathered}
        da = 0,\quad d(Ja) = x_1aJa,\quad
        db = y_1aJa+y_2ab+y_3aJb+z_1bJa+z_2JaJb,\\
        d(Jb) = u_1aJa+u_2ab+u_3aJb+v_1bJa+v_2JaJb+w_1bJb,
      \end{gathered}
    \end{equation} or
  \item[Real case:] \( \g \) has structural equations
    \begin{equation}
      \label{eq:strR}
      \begin{gathered}
        da = 0,\quad
        d(Ja) = x_1aJa+x_2(ab+JaJb)+x_3(aJb+bJa)+y_2bJb,\\
        db = z_1aJa+z_2ab+z_3aJb,\\
        d(Jb) = u_1aJa+u_2ab+u_3aJb+v_1bJa+v_2bJb+w_1JaJb.
      \end{gathered}
    \end{equation}
  \end{asparadesc}
  In the complex case, \( \{a,Ja,b,Jb\} \) may be chosen orthonormal and \(
  \omega = aJa+bJb \), omitting \( \wedge \) signs.  In the real case, \(
  \omega = aJa+bJb + t(ab+JaJb) \) for some \( t\in (-1,1) \).
\end{lemma}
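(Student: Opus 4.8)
The plan is to build the adapted coframe by intersecting the solvable filtration of Lemma~\ref{lem:solvbase} with the complex structure, and to let the dichotomy between the two normal forms be controlled by the position of a single \( J \)-invariant plane inside that filtration. First I would extract a closed one-form: since we may assume \( \g \) is non-Abelian, \( \g'\subsetneq\g \), so \( W_1=\ker(d\colon\g^*\to\Lambda^2\g^*)=\operatorname{Ann}(\g') \) is nonzero and I choose a closed unit covector \( a \). As \( J \) is orthogonal on \( \g^* \), the covectors \( a \) and \( Ja \) are orthonormal and \( P=\Span{a,Ja} \) is \( J \)-invariant, as is \( P^\perp \). Next I would use integrability of \( J \): for the \( (1,0) \)-form \( \sigma=a-iJa \) one has \( d\sigma=-i\,d(Ja) \), and \( d\sigma\in\Lambda^{2,0}+\Lambda^{1,1} \) together with reality of \( d(Ja) \) forces \( d(Ja)\in\Lambda^{1,1}_{\bR} \).

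The source of the two cases is a short linear-algebra fact: since \( \I(a)\cap\Lambda^2=a\wedge\g^* \) meets \( \Lambda^{1,1}_{\bR} \) only in \( \Span{a\wedge Ja} \), one has \( d(Ja)\in\I(a) \) if and only if \( d(Ja)=x_1\,a\wedge Ja \). I would therefore split according to whether a closed unit covector \( a \) can be chosen with \( d(Ja)\in\I(a) \). In the \emph{complex case} such an \( a \) exists, the plane \( P \) is itself a filtration step with \( dP\subseteq\I(a) \), and I complete \( a,Ja \) to an orthonormal basis by a unit \( b\in P^\perp\cap W_3 \) (nonzero for dimension reasons). Then \( \{a,Ja,b,Jb\} \) is orthonormal with \( \omega=a\wedge Ja+b\wedge Jb \), and the filtration condition \( db\in\I(P) \) removes the \( b\wedge Jb \) component of \( db \); reading off the surviving terms of \( db \) and \( d(Jb) \), and imposing integrability on \( b-iJb \), yields~\eqref{eq:strC}.

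In the \emph{real case} no such \( a \) exists, so the second filtration step cannot be \( J \)-invariant. Choosing \( b \) spanning \( W_2 \) over \( W_1 \) with \( \{a,b\} \) orthonormal and \( db\in\I(a) \), one necessarily has \( b\neq\pm Ja \), whence \( \Span{a,b}\neq P \) and \( \{a,Ja,b,Jb\} \) is a genuine, non-orthonormal basis. Because \( J \) still acts standardly on it, \( \Lambda^{1,1}_{\bR} \) is spanned by \( a\wedge Ja \), \( b\wedge Jb \), \( a\wedge b+Ja\wedge Jb \), \( a\wedge Jb+b\wedge Ja \), so \( d(Ja) \) now takes its full four-term shape while \( db\in\I(a) \) keeps only its three terms. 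Computing the Gram matrix of \( \{a,Ja,b,Jb\} \) shows the sole free inner product is \( t=g(a,Jb) \), giving \( \omega=a\wedge Ja+b\wedge Jb+t(a\wedge b+Ja\wedge Jb) \), and positive-definiteness of \( g \) confines \( t \) to \( (-1,1) \); this produces~\eqref{eq:strR}.

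The main obstacle I expect is proving the dichotomy is genuinely exhaustive, that is, that failure of the complex condition is stable under every admissible choice. When \( \dim W_1\geq 2 \) there is a whole family of closed unit covectors to test against \( d(Ja)\in\I(a) \), and I must rule out that some rotation secretly lands in the complex case before committing to the real normal form; dually I must verify that the real-case basis is never degenerate and that the single parameter \( t \) really captures the metric. Establishing the range \( t\in(-1,1) \) and checking that the displayed coefficient patterns are exactly those compatible with integrability are the remaining technical points, but they are routine once the correct plane has been located.
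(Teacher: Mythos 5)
Your overall route is the same as the paper's (a solvability filtration, a dichotomy according to whether the two-dimensional filtration step can be taken \(J\)-invariant, integrability forcing \(d(Ja)\in\Lambda^{1,1}\), Cauchy--Schwarz for \(t\)), but your complex case has a genuine gap: you take a unit \(b\in P^\perp\cap W_3\) and assert that ``the filtration condition \(db\in\I(P)\)'' holds. That does not follow, because the maximal filtration \(W_i\) of Lemma~\ref{lem:solvbase} is not adapted to \(P\). One always has \(P\subseteq W_2\) (by maximality, since \(da=0\) and \(d(Ja)\in\I(a)\subseteq\I(W_1)\)), hence \(b\in W_3\) only gives \(db\in\I(W_2)\supseteq\I(P)\); when \(W_2\) is strictly larger than \(P\) this puts no restriction at all on the \(b\wedge Jb\) component of \(db\) --- and killing that component is exactly what distinguishes \eqref{eq:strC} from a general two-form. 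Concretely, on \(\g^*=\Span{a,Ja,b,Jb}\) with the standard orthonormal \(J\)-basis define \(da=0=d(Ja)\), \(db=b\wedge Jb\), \(d(Jb)=\lambda\,b\wedge Jb\) with \(\lambda\neq0\). Then \(d^2=0\), the derived algebra is one-dimensional so \(\g\) is solvable (it is \(\bR\times\lr_{3,0}\)), \(J\) is integrable since \(d(b-iJb)=(1-i\lambda)\,b\wedge Jb\in\Lambda^{1,1}\), and we are in your complex case because \(d(Ja)=0\in\I(a)\). But \(W_1=\ker d=\Span{a,Ja,\lambda b-Jb}\) is three-dimensional, so \(W_2=\g^*\) (and \(W_3=\g^*\) under any convention); your prescription therefore allows the choice \(b\in P^\perp\cap W_3\) with \(db=b\wedge Jb\notin\I(P)\), and the normal form \eqref{eq:strC} fails for that choice.

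The missing idea is precisely why a filtration can be chosen \emph{through} \(P\): you need a three-dimensional \(V_3\supseteq P\) with \(dV_3\subseteq\I(P)\), and \(b\) must be taken from \(V_3\cap P^\perp\) (in the example above, \(b'\propto\lambda b-Jb\), which is closed). The paper builds this into its definition of the complex case --- the \emph{refined} filtration \(V_i\) of \eqref{eq:rbase} can be chosen with \(V_2=JV_2\) --- so that \(db\in\I(V_2)\) holds by construction. To repair your version, either prove the extension exists (the induced map \(\g^*/P\to\Lambda^2\g^*/(P\wedge\g^*)\) goes from a two-dimensional space to a one-dimensional one, so it has nontrivial kernel, giving \(b\notin P\) with \(db\in\I(P)\), which you may then orthogonalise against \(P\)), or argue directly by rotation in \(P^\perp\): if \(w_1,w_2\) denote the \(b\wedge Jb\)-coefficients of \(db,d(Jb)\), then \(b_\theta=\cos\theta\,b+\sin\theta\,Jb\) satisfies \(b_\theta\wedge Jb_\theta=b\wedge Jb\) and \(db_\theta\) has \(b\wedge Jb\)-coefficient \(\cos\theta\,w_1+\sin\theta\,w_2\), which vanishes for suitable \(\theta\). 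The same inadaptedness infects your real case: when \(\dim W_1\geq2\) a covector ``spanning \(W_2\) over \(W_1\)'' need not satisfy \(db\in\I(a)\) (take \(b\in W_1\cap a^\perp\) there instead, which is closed). By contrast, the obstacle you flag as the main one is vacuous: your dichotomy is existential, and the lemma's two cases need not be exclusive, so nothing has to be ``ruled out'' before using the real normal form.
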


\begin{proof}
  Let \( V_i \) be a refined filtration of \( \g \) as in~\eqref{eq:rbase}.
  As \( \dim_{\bR}V_2 = 2 \) we have two possibilities for the complex
  subspace \( V_2\cap JV_2 \), either it is non-trivial so \( V_2 = JV_2 \)
  or it is zero.  If the filtration \( V_i \) can be chosen with \( V_2 =
  JV_2 \) we will say we are in the complex case, otherwise we are in the
  real case.

  For the complex case, \( JV_2 = V_2 \) and \( V_1\subseteq V_2\cap\ker d
  \), so we may take an orthonormal basis \( \{a,Ja\} \) of \( V_2 \) with
  \( a\in V_1 \).  We have \( da = 0 \) and solvability implies \( d(Ja)
  \in \I(a) \cap \Lambda^2 = \bR aJa \oplus a\wedge V_2^\bot \).  As \( J
  \) is integrable, we must have \( d(Ja) \in \Lambda^{1,1} \) too, so \(
  d(Ja) = x_1aJa \).

  In the real case, choose \( a\in V_1 \) and \( b\in V_2\cap V_1^\bot \)
  of unit length.  Then \( da = 0 \) and the form of \( d(Ja) \) follows
  from the condition \( d(Ja) \in \Lambda^{1,1} \).  The form of \( \omega
  \) follows from \( t = g(b,Ja) \) which has absolute value less than \( 1
  \) by the Cauchy-Schwarz inequality.
\end{proof}

The above equations are necessary but far from sufficient.  For
integrability it remains to impose \( d(b-iJb)^{0,2} = 0 \), and to obtain
a Lie algebra we must satisfy the Jacobi identity.  The latter is
equivalent to the condition \( d^2 = 0 \).  Both of these conditions are
straightforward to compute.  We list the results below.  In each case the
first line comes from the integrability condition on~\( J \), in the last
line we provide the \SKT condition and the remaining equations are from \(
d^2 = 0 \).

\begin{lemma}
  The structural equations of Lemma~\ref{lem:struct} give an \SKT structure
  on a solvable Lie algebra if and only if the following quantities vanish:
  \begin{asparadesc}
  \item[Complex case:]
    \begin{equation}
      \label{eq:condC}
      \begin{gathered}
        y_2-z_2-u_3+v_1,\quad y_3-z_1+u_2-v_2,\\
        x_1z_1-y_3v_1-z_2u_2,\quad (x_1-y_2+u_3)z_2-y_3(z_1+v_2),\\
        y_2w_1,\quad y_3w_1,\quad z_1w_1,\quad z_2w_1,\\
        (x_1+y_2-u_3)v_1-(z_1+v_2)u_2+u_1w_1,\\
        x_1v_2+y_1w_1-y_3v_1-z_2u_2,\\
        (x_1+y_2+u_3)(y_2+u_3)+(z_1-v_2)^2-u_1w_1.
      \end{gathered}
    \end{equation}
  \item[Real case:]
    \begin{equation}
      \label{eq:condR}
      \begin{gathered}
        z_2-u_3+v_1,\quad z_3+u_2-w_1,\\
        x_2u_2-x_3(z_2-v_1)-y_2u_1,\quad (-x_1+z_2+u_3)y_2+x_2^2+x_3(x_3-v_2),\\
        x_2u_3-x_3(w_1+z_3)+y_2z_1,\quad (x_1+z_2-u_3)v_1-(x_3-v_2)u_1-u_2w_1,\\
        x_2v_2-y_2w_1,\quad x_3z_1+z_3v_1,\quad y_2z_1+z_3v_2,\quad
        x_2z_1+z_3w_1,\quad x_2v_1-x_3w_1,\\
        x_2w_1+x_3v_1-y_2u_1+z_2v_2,\quad x_1w_1-x_2u_1+z_1v_2-z_3v_1,\\
        \begin{multlined}
          \bigl\{(x_1+z_2+u_3)(-y_2+z_2+u_3)+x_2(x_2-z_1+tv_2)\qquad\\
          +\bigl(x_3-u_1+t(u_2-w_1)\bigr)(x_3+v_2) +w_1^2\bigr\}.
        \end{multlined}
      \end{gathered}
    \end{equation}
  \end{asparadesc}
\end{lemma}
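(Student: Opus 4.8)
The plan is to treat this as a direct, if lengthy, computation. Starting from the structural equations in Lemma~\ref{lem:struct}, I would impose the three remaining conditions in sequence and read off the resulting polynomial constraints on the parameters. Let me think about how each piece arises and which step will dominate.

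First, I would pin down the integrability condition. The equations \eqref{eq:strC} and \eqref{eq:strR} were derived only from solvability together with $d(Ja)\in\Lambda^{1,1}$; full integrability still requires $d(b-iJb)^{0,2}=0$. So my first step is to form the $(1,0)$-form $\beta = b - iJb$, compute $d\beta$ using the given expressions for $db$ and $d(Jb)$, and extract its $(0,2)$-part by projecting onto $\overline\beta\wedge\overline{\alpha}$ where $\alpha = a - iJa$. Concretely, I would rewrite each product of real basis one-forms ($aJa$, $ab$, $aJb$, $bJa$, $JaJb$, $bJb$) in the complex basis $\{\alpha,\overline\alpha,\beta,\overline\beta\}$ and collect the coefficient of the unique $(0,2)$-monomial $\overline\alpha\,\overline\beta$. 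Setting this to zero yields the first line of each of \eqref{eq:condC} and \eqref{eq:condR}.

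Next I would impose the Jacobi identity in its dual form $d^2=0$, which is the bulk of the algebraic content. The key computational move is to compute $d(da)$, $d(d(Ja))$, $d(db)$, $d(d(Jb))$ by applying $d$ to each two-form on the right-hand sides of the structural equations via the Leibniz rule $d(\phi\wedge\psi) = d\phi\wedge\psi - \phi\wedge d\psi$, substituting the structural equations again for the inner derivatives. Since each $d$ of a basis one-form lands in $\Lambda^2$, the second derivative lands in $\Lambda^3$, and I would expand everything in the basis of $3$-forms $\{aJab,\ aJaJb,\ abJb,\ aJbbJa,\dots\}$ — equivalently the four-element basis of $\Lambda^3\g^*$ dual to $\{a,Ja,b,Jb\}$ — and set each coefficient to zero. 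This produces the middle block of quadratic relations among the parameters in each case.

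Finally, the \SKT condition itself: in dimension four this is $dJd\omega=0$, equivalently (as noted in the introduction) that the Lee form $\theta = Jd^*\omega$ be co-closed. Here I would compute $d\omega$ from $\omega = aJa+bJb$ (complex case) or $\omega = aJa+bJb+t(ab+JaJb)$ (real case), apply $J$ to obtain $Jd\omega$, differentiate once more, and expand $dJd\omega\in\Lambda^4\g^*$ against the single top form $aJabJb$; the vanishing of that scalar coefficient is the last displayed expression in each case. The main obstacle will be bookkeeping: reliably converting between the real product basis and the complex basis, correctly tracking signs under the Leibniz rule and under the action of $J$ on forms (recalling $Jd\omega = -d\omega(J\cdot,J\cdot,J\cdot)$), and verifying that after all substitutions the coefficients collapse to precisely the listed polynomials — in particular that the $t$-dependence in the real case enters only through the final \SKT scalar, as the statement asserts. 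These are straightforward in principle, so I would organize the computation by fixing a definite ordered monomial basis in each degree and checking $d^2=0$ coefficient-by-coefficient before folding in the \SKT and integrability constraints.
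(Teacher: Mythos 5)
Your proposal is correct and follows essentially the same route as the paper, which obtains the first line of each system from $d(b-iJb)^{0,2}=0$, the middle block from $d^2=0$, and the final expression from $dJd\omega=0$, treating all three as direct computations in the given bases. The only slip is cosmetic: your list of degree-three monomials should end with $Ja\wedge b\wedge Jb$ rather than the four-form $a\wedge Jb\wedge b\wedge Ja$.
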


In some cases the \SKT structure reduces to Kähler.  This occurs if and only if
the following additional conditions hold:
\begin{asparadesc}
\item[Complex case:]
  \begin{equation}
    y_1 = 0 = u_1,\quad u_3 = -y_2,\quad v_2 = z_1\label{eq:KcondC}
  \end{equation}
\item[Real case:]
  \begin{equation}
    \label{eq:KcondR}
    \begin{gathered}
      x_2-z_1 = t(x_1+u_3),\quad x_3 - u_1 = - tu_2,\quad y_2 - z_2 - u_3 =
      tx_2,\\
      w_1 = t(x_3 + v_2).
    \end{gathered}
  \end{equation}
\end{asparadesc}

\section{Low-dimensional solvable Lie algebras}

The four-dimensional solvable real Lie algebras are classified in
\cite{Andrada-BDO:four}, and we shall identify the algebras we obtain with
algebras on the known list. In this section we summarise the classification
and notation.

The map \( \chi\colon\g\to\bR \), \( \chi(x) = \tr(\ad(x)) \), is a Lie
algebra homomorphism.  Its kernel~\( \um(\g) \), the \emph{unimodular
kernel of \( \g \)}, is an ideal in \( \g \) containing the derived algebra
\( \g' \).  The Lie algebra \( \g \) is said to be \emph{unimodular} if \(
\chi\equiv0 \).  Note that if \( G \) admits a co-compact discrete subgroup
then \( \g \) is necessarily unimodular~\cite{Milnor:left}.

Our notation for the three-dimensional solvable Lie algebras will be as
given in Table \ref{tab:3solv}.  Note that \( \lr_{3,0}\cong
\bR\times\aff_\bR \).
\begin{table}[htp]
  \centering
  \begin{tabular}{LLL}
    \toprule
    \affR            & (0,21)                           &                         \\
    \h_3             & (0,0,21)                         &                         \\
    \lr_3            & (0,21+31,31)                     &                         \\
    \lr_{3,\lambda}  & (0,21,\lambda31)                 & \abs{\lambda}\leqslant1 \\
    \lr'_{3,\lambda} & (0,\lambda 21+31,-21+\lambda 31) & \lambda\geqslant0       \\
    \bottomrule
  \end{tabular}
  \caption{Non-Abelian solvable Lie algebras of dimension at most three
  that are not of product type.}
  \label{tab:3solv}
\end{table}

The four dimensional solvable Lie algebras are classified as follows.

\begin{theorem}[\cite{Andrada-BDO:four}]
  \label{theorem:clsfsol4}
  Let \( \g \) be a four dimensional solvable real Lie algebra. Then \( \g
  \) is isomorphic to one and only one of the following Lie algebras: \(
  \bR^4 \), \( \affR\times\affR \), \( \bR\times\h_3 \), \( \bR\times\lr_3
  \), \( \bR\times\lr_{3,\lambda} \) \( (\abs{\lambda}\leqslant 1) \), \(
  \bR\times\lr'_{3,\lambda} \) \( (\lambda\geqslant 0) \), or one of the
  algebras in Table~\ref{tab:4dnp}.

  Among these the unimodular algebras are: \( \bR^4 \), \( \bR\times\h_3
  \), \( \bR\times\lr_{3,-1} \), \( \bR\times\lr'_{3,0} \), \(
  \mathfrak{n}_4 \), \( \lr_{4,-1/2} \), \( \lr_{4,\mu,-1-\mu} \) \(
  (-1<\mu\leqslant-\frac12) \), \( \lr'_{4,\mu,-\mu/2} \), \( \ld_4 \), \(
  \ld'_{4,0} \).
\end{theorem}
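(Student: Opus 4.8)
The plan is to classify \( \g \) by the isomorphism type of its derived algebra, realising \( \g \) as an extension of the abelian quotient \( \g/\g' \) by the nilpotent ideal \( \g' \). By Remark~\ref{rem:4dsol}, since \( \dim\g = 4 \), the derived algebra \( \g' \) is nilpotent of dimension at most three and is therefore one of \( \{0\} \), \( \bR \), \( \bR^2 \), \( \bR^3 \), or the Heisenberg algebra \( \h_3 \). These possibilities, refined by \( \dim\g' \), organise the whole argument: the case \( \dim\g' = 0 \) gives \( \g = \bR^4 \) at once, and each remaining case is treated separately.

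For the remaining cases, fix a vector-space complement \( \as \) to \( \g' \), so that \( \g = \g'\oplus\as \) with \( \g/\g'\cong\as \) abelian. The bracket is then encoded by three pieces of data: the nilpotent bracket already fixed on \( \g' \); the commuting family of derivations \( \{\ad_X|_{\g'} : X\in\as\} \) describing how \( \as \) acts on \( \g' \); and the ``cocycle'' \( \Lambda^2\as\to\g' \). The core of the classification is to reduce this data to normal form modulo the automorphisms of \( \g \), that is, modulo change of complement, change of basis of \( \g' \) preserving its nilpotent structure, and rescaling. Concretely one brings each acting derivation to real Jordan (equivalently, rational canonical) form; the eigenvalues, and in particular the eigenvalue ratios that survive simultaneous rescaling, emerge as the continuous invariants \( \lambda \) and \( \mu \) of the families \( \lr_{3,\lambda} \), \( \lr'_{3,\lambda} \), \( \lr_{4,\mu,\nu} \) and \( \lr'_{4,\mu,\nu} \). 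Product algebras such as \( \affR\times\affR \) or \( \bR\times\lr_{3,\lambda} \) arise precisely when this data decomposes as a direct sum of lower-dimensional pieces.

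The main obstacle is the ``one and only one'' assertion, which splits into two halves. \emph{Completeness} --- that every \( \g \) occurs on the list --- requires an exhaustive but finite case analysis of the possible commuting derivations on each \( \g' \), including the parameter ranges (\( \abs{\lambda}\leqslant1 \), \( \lambda\geqslant0 \), \( -1<\mu\leqslant-\tfrac12 \), and so on) that record the residual freedom after normalisation. \emph{Distinctness} --- that no two listed algebras are isomorphic --- is the more delicate part: one must exhibit isomorphism invariants that separate the families and fix the endpoints of the parameter intervals. Suitable invariants are \( \dim\g' \) and the isomorphism type of \( \g' \); whether \( \g' \) lies in the centre or is acted on faithfully; and, for the faithful parametric families, the unordered eigenvalue set of a generator of \( \as \) acting on \( \g' \), taken up to an overall real scale (and up to complex conjugation in the \( \lr' \) families). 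Verifying that these quantities are genuinely preserved under all automorphisms, and that the stated ranges form exactly a set of representatives, is what pins down the interval endpoints and the isolated exceptional value \( -\tfrac12 \).

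For the unimodular classification, observe first that Lie's Theorem forces \( \ad_Y \) to be nilpotent, hence traceless, for every \( Y\in\g' \); so the homomorphism \( \chi(X) = \tr(\ad_X) \) vanishes on \( \g' \) and is determined by its restriction to the complement \( \as \). Unimodularity \( \chi\equiv0 \) therefore reduces to the single requirement that the acting derivation of a generator of \( \as \) have vanishing trace, which in each normal form is an explicit linear condition on the parameters. Reading it off --- the eigenvalues must sum to zero, forcing \( \lambda = -1 \) in \( \bR\times\lr_{3,\lambda} \), \( \mu+\nu = -1 \) in \( \lr_{4,\mu,\nu} \), and the value \( -\tfrac12 \) in \( \lr_{4,-1/2} \) --- selects exactly the algebras listed as unimodular. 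This last step is routine once the normal forms and their traces are in hand.
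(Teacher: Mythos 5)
Note first that the paper offers no proof of this statement: Theorem~\ref{theorem:clsfsol4} is quoted, with attribution, from \cite{Andrada-BDO:four}, and the surrounding section only fixes notation and records recognition invariants (the unimodular kernel, \( \z(\g) \), \( \g/\z(\g') \)) that are used later to identify algebras on the list. So the comparison here is not with an in-paper argument but with the cited classification literature; your strategy --- stratify by \( \g' \) using Remark~\ref{rem:4dsol}, split \( \g = \g'\oplus\as \) as vector spaces, normalise the derivations \( \ad_X|_{\g'} \) together with the bracket \( \Lambda^2\as\to\g' \), and extract eigenvalue ratios as the continuous invariants --- is indeed the standard route by which such classifications are obtained.

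Nevertheless, as a proof your proposal has a genuine gap: it is a plan, not an execution. For a classification theorem the mathematical content \emph{is} the exhaustive case analysis, and you defer all of it (``requires an exhaustive but finite case analysis'', ``Verifying that these quantities are genuinely preserved under all automorphisms \dots is what pins down the interval endpoints''). Nowhere do you derive the normal forms of Tables~\ref{tab:3solv} and~\ref{tab:4dnp}, show that the stated parameter ranges (\( \abs{\lambda}\leqslant1 \), \( \lambda\geqslant0 \), the set \( \mathscr R_4 \)) form exactly a fundamental domain for the automorphism action, or check that your proposed invariants actually separate, say, \( \ld_{4,\lambda} \), \( \ld'_{4,\lambda} \) and \( \h_4 \), all of which have \( \g' = \h_3 \). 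The hardest case is also the one your outline glosses over: for \( \g' = \bR^2 \) one must simultaneously normalise \emph{two} commuting derivations and the cocycle \( \Lambda^2\as\to\g' \), whose interaction under change of complement is what distinguishes, e.g., \( \n_4 \), \( \affC \) and \( \ld_{4,1} \); a one-derivation Jordan-form argument does not suffice there. Finally, in the unimodular step, ``the acting derivation of a generator of \( \as \)'' should read ``of every generator'': when \( \dim\as = 2 \) unimodularity imposes one trace condition per generator (for \( \affC \) one acting derivation has trace \( 2 \) while the other is traceless, which is exactly why \( \affC \) is excluded). Granted the normal forms, your trace computation is correct and does reproduce the stated unimodular sublist, so that half is the only part of the argument that is essentially complete.
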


\begin{table}[htp]
  \centering
  \begin{tabular}{LLL}
    \toprule
    \n_4&(0,0,21,31)&\\
    \affC&(0,0,31-42,41+32)&\\
    \lr_4&(0,21+31,31+41,41)&\\
    \lr_{4,\lambda}&(0,21,\lambda31+41,\lambda41)&\\
    \lr_{4,\mu,\lambda}&(0,21,\mu31,\lambda41)&\mu,\lambda\in \mathscr R_4\\
    \lr'_{4,\mu,\lambda}&(0,\mu21,\lambda31+41,-31+\lambda41)&\mu>0\\
    \ld_4&(0,21,-31,32)\\
    \ld_{4,\lambda}&(0,\lambda21,(1-\lambda)31,41+32)&\lambda\geqslant\tfrac12\\
    \ld'_{4,\lambda}&(0,\lambda21+31,-21+\lambda31,2\lambda.41+32)&\lambda\geqslant 0\\
    \h_4&(0,21+31,31,2.41+32)&\\
    \bottomrule
  \end{tabular}
  \caption{Four-dimensional solvable Lie algebras not of product type.  The
  set $\mathscr R_4$ consists of the $(\mu,\lambda)\in [-1,1]^2$ with
  $\lambda\geqslant\mu$ and $\mu,\lambda\ne0$ and satisfying
  $\lambda<0$ if $\mu = -1$.}
  \label{tab:4dnp}
\end{table}

In the Table \ref{tab:4solv} the four-dimensional solvable real Lie
algebras are sorted by their derived algebra \( \g' \).
\begin{table}[htp]
  \centering
  \begin{tabular}{LLL}
    \toprule
    \g' &\z(\g)& \g\\
    \midrule
    \{0\} && \bR^4\\
    \bR && \bR\times\h_3,\  \bR\times\lr_{3,0}\\
    \midrule
    \bR^2&\{0\}& \affR\times\affR,\  \affC,\  \ld_{4,1}\\
    &\bR& \bR\times\lr_3,\  \bR\times\lr_{3,\lambda\ne0},\
    \bR\times\lr'_{3,\lambda},\  \lr_{4,0},\  \n_{4}\\ \midrule 
    \bR^3 && \lr_4,\ \lr_{4,\lambda\ne0},\
    \lr_{4,\mu,\lambda},\ \lr'_{4,\mu,\lambda}\\ 
    \h_3 && \ld_4,\  \ld_{4,\lambda\ne1},\  \ld'_{4,\lambda},\ \h_4\\   
    \bottomrule
  \end{tabular}
  \caption{The four-dimensional solvable Lie algebras sorted by $\g'$ and,
  where necessary, $\z(\g)$. The conditions on the parameters are in
  addition to those from Tables~\ref{tab:3solv} and~\ref{tab:4dnp}.}
  \label{tab:4solv}	
\end{table}
In some cases it is easy to recognise which algebra is at hand using the
following observations:
\begin{asparadesc}
\item[\( \g' = \bR \):] \( \bR\times\h_3 \) is nilpotent, \(
  \bR\times\lr_{3,0} \) is not.
\item[\( \g' = \bR^2,\ \z(\g) = \{0\} \):] \( \affR\times\affR \) and \(
  \ld_{4,1} \) are completely solvable, \( \affC \) is not. Moreover these
  algebras have different unimodular kernels:
  \begin{equation*}
    \um(\affR\times\affR)\cong \lr_{3,-1},\quad \um(\ld_{4,1})\cong
    \h_3,\quad \um(\affC)\cong \lr'_{3,0}.
  \end{equation*}
\item[\( \g' = \h_3 \):] the algebras are distinguished by \( \tilde\g =
  \g/\z(\g') \) as follows:
  \begin{equation*}
    \tilde \ld_4 \cong \lr_{3,-1}       ,\quad 
    \tilde \ld_{4,\lambda\ne1} \cong \lr_{3,(1-\lambda)/\lambda} ,\quad
    \tilde \ld'_{4,\lambda} \cong  \lr'_{3,\lambda} ,\quad 
    \tilde \h_4  \cong \lr_3.
  \end{equation*}
\end{asparadesc}

\section{The SKT classification}
\label{sec:clsfskt}
We are now ready to describe the simply-connected four-dimensional solvable
real Lie groups admitting invariant \SKT structures.

\begin{theorem}
  \label{thm:clsfskt1}
  Let \( G \) be a simply-connected four-dimensional solvable real Lie
  group.  Then \( G \) admits a left-invariant \SKT structure if and only if
  its Lie algebra \( \g \) is listed in Table~\ref{tab:4solvskt}.
\end{theorem}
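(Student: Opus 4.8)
The plan is to prove Theorem~\ref{thm:clsfskt1} by a systematic case analysis organized around the classification of four-dimensional solvable Lie algebras. I need to determine, for each algebra in Theorem~\ref{theorem:clsfsol4}, whether there exists an integrable Hermitian structure satisfying the \SKT condition, and if so to record the resulting structures in Table~\ref{tab:4solvskt}. The natural strategy is to work from the structural equations of Lemma~\ref{lem:struct}: every integrable Hermitian structure on a four-dimensional solvable \( \g \) falls into either the complex case~\eqref{eq:strC} or the real case~\eqref{eq:strR}, and the \SKT condition is then exactly the vanishing of the quantities in~\eqref{eq:condC} or~\eqref{eq:condR} respectively.

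First I would set up the computation in the opposite direction from a pure existence argument: rather than testing each named algebra, I would solve the \SKT system~\eqref{eq:condC} and~\eqref{eq:condR} directly for all admissible parameter values, treating the structure constants \( x_i,y_i,z_i,u_i,v_i,w_1 \) (and \( t \) in the real case) as unknowns subject to the polynomial constraints. Each solution branch determines a Lie algebra via its bracket, which I then identify against Table~\ref{tab:4dnp} using the invariants already tabulated: the isomorphism type of \( \g' \) (Abelian, \( \bR^2 \), \( \bR^3 \), or \( \h_3 \)) from Table~\ref{tab:4solv}, the center \( \z(\g) \), the unimodular kernel \( \um(\g) \), complete solvability, and the quotient \( \tilde\g = \g/\z(\g') \), following the distinguishing observations listed at the end of Section~4. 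This gives both existence (by exhibiting explicit \( (g,J) \)) and, for the algebras not appearing, non-existence (because the \SKT system has no admissible solution realizing that bracket).

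The delicate direction is the negative one: showing that certain algebras admitting invariant complex structures nonetheless admit no invariant \SKT metric, since this is one of the paper's advertised conclusions. For these I cannot merely fail to find a solution; I must argue that the polynomial \SKT conditions are incompatible with the bracket relations of the target algebra across \emph{every} choice of compatible \( J \) and \( g \). The cleanest route is to fix, for the given algebra, the most general integrable Hermitian structure in the normal form of Lemma~\ref{lem:struct} — exploiting the remaining orthogonal gauge freedom in the basis \( \{a,Ja,b,Jb\} \) to reduce parameters — and then derive a contradiction from~\eqref{eq:condC} or~\eqref{eq:condR} combined with the constraints imposed by \( \g' \), \( \z(\g) \), and unimodularity. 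I expect the main obstacle to be this parameter bookkeeping: each algebra can sit in the complex or the real case (and sometimes both), the residual gauge group acting on the two normal forms must be understood well enough to avoid spurious distinct solutions, and the \SKT polynomials are numerous and coupled, so ruling out solutions rigorously rather than by inspection requires care.

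Finally I would assemble the verified structures into Table~\ref{tab:4solvskt}, checking that the list is exhaustive (every algebra of Theorem~\ref{theorem:clsfsol4} has been treated, including the product-type algebras \( \bR\times\lr_{3,\lambda} \), \( \bR\times\lr'_{3,\lambda} \) and the families from Table~\ref{tab:4dnp}) and non-redundant (distinct table entries give non-isomorphic algebras, again via the invariants of Section~4). The Kähler reductions~\eqref{eq:KcondC} and~\eqref{eq:KcondR} should be recorded where they occur, so that the properly \SKT (non-Kähler) examples are clearly separated from the Kähler ones, which is what makes the new solvable examples stand out against the nilpotent classification of Fino, Parton \& Salamon.
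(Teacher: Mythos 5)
Your proposal is correct and takes essentially the same route as the paper: Section~\ref{sec:clsfskt} likewise solves the \SKT systems \eqref{eq:condC}/\eqref{eq:condR} forward in the normal forms of Lemma~\ref{lem:struct}, organizing the cases by the type of \( \g' \) (following Remark~\ref{rem:4dsol}) and identifying each solution branch with an algebra from the classification via the invariants \( \z(\g) \), \( \um(\g) \) and \( \g/\z(\g') \). The only remark worth making is that your separate per-algebra contradiction argument for the negative direction is redundant: since the forward solve is exhaustive (every invariant Hermitian structure falls under Lemma~\ref{lem:struct}), the algebras absent from the solution set automatically admit no invariant \SKT structure, which is exactly how the paper obtains its non-existence statements.
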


The table also indicates which groups admit invariant Kähler metrics, and
gives the dimensions of the Lie algebra cohomology.

\begin{table}[htp]
  \centering
  \begin{tabular}{LLCCcC}
    \toprule
    \g'   & \g                                      & \dim & \pi_0 & Kähler & (b_1\dots b_4) \\
    \midrule
    \{0\} & \bR^4                                   & 0    & 1     & \T     & (4,6,4,1)      \\
    \midrule
    \bR   & \bR\times\h_3                           & 0    & 1     & \X     & (3,4,3,1)      \\
          & \bR\times\lr_{3,0}                      & 1    & 1     & \T     & (3,3,1,0)      \\
    \midrule
    \bR^2 & \bR\times\lr'_{3,0}                     & 1    & 1     & \T     & (2,2,2,1)      \\ 
          & \affR\times\affR                        & 2    & 1     & \T     & (2,1,0,0)      \\			
    \midrule
    \bR^3 & \lr'_{4,\lambda,0}\  (\lambda>0)        & 1    & 2     & \T     & (1,1,1,0)      \\
          & \lr_{4,-1/2,-1/2}                       & 1    & 1     & \X     & (1,0,1,1)      \\
          & \lr'_{4,2\lambda,-\lambda}\ (\lambda>0) & 1    & 2     & \X     & (1,0,1,1)      \\
    \midrule
    \h_3  & \ld_{4}                                 & 2    & 1     & \X     & (1,0,1,1)      \\
          & \ld_{4,2}                               & 2    & 1     & \T     & (1,1,1,0)      \\
          & \ld'_{4,0}                              & 2    & 1     & \X     & (1,0,1,1)      \\
          & \ld_{4,1/2}                             & 1    & 1     & \T     & (1,0,0,0)      \\	
          & \ld'_{4,\lambda}\ (\lambda>0)           & 1    & 1     & \T     & (1,0,0,0)      \\
    \bottomrule
  \end{tabular}
  \caption{The four-dimensional solvable Lie algebras that admit an \SKT
  structure.  Of these, only $\bR^4$ fails to admit an \SKT structure
  that is not Kähler.  In the table, $\dim$ and $\pi_0$ are the dimension and number of
  components of the \SKT moduli space modulo homotheties, $b_k$ denotes
  $\dim H^{k}(\g)$.}
  \label{tab:4solvskt}
\end{table}

The proof will occupy the rest of this section.  Following
Remark~\ref{rem:4dsol} we analyse the possible solutions to the equations
of~\S\ref{sec:SKT} case by case after the type of~\( \g' \).  When talking
of the \SKT moduli space, we consider only left-invariant structures on the
given \( \g \) and regard two structures as equivalent if one may be
obtained from the other via a Lie algebra automorphism of~\( \g \).

\subsection{Trivial derived algebra}

For \( \g' = \{0\} \), \( \g\cong\bR^4 \) is Abelian, \( d\equiv0 \) so all
structure constants are zero and each almost Hermitian structure is Kähler.
All these Kähler structures are equivalent.

\subsection{One-dimensional derived algebra}
\label{sec:1d}

For \( \g' = \bR \), we have \( \dim W_1 = 3 \). It follows that we can
choose \( a,\,Ja,\,b \in W_1 \) and are thus in the case \( V_2 = JV_2
\). The structural equations for \( \g \) in this case are
\begin{gather*}
  da = 0 = d(Ja) = db,\\
  d(Jb) = u_1aJa+u_2(ab+JaJb)+u_3(aJb+bJa)+w_1bJb,
\end{gather*}
where the coefficients satisfy \( 0 = u_2^2+u_3^2-u_1w_1 \) and \(
d(Jb)\ne0 \).  Rotating \( a,Ja \) in~\( V_2 \), we may ensure that \( u_2
= 0 \) and \( u_3\geqslant 0 \), so \( u_1w_1 = u_3^2 \).  Replacing \( b
\) by \( -b \), we obtain \( w_1\geqslant0 \).

If \( w_1 = 0 \) then \( u_3 = 0 \), \( u_1 \ne 0 \). Thus we have the
algebra given by
\begin{gather}
  da = 0 = d(Ja) = db,\quad d(Jb) = u_1aJa.
\end{gather}
The resulting \SKT metrics admitted are non-Kähler and homothetic.
Moreover we see that \( \g \) is nilpotent and so isomorphic to \(
\bR\times\h_3 \).

If \( w_1> 0 \) then \( \g \) is not nilpotent and so isomorphic to \(
\bR\times\lr_{3,0} \).  As \( u_1w_1 = u_3^2 \geqslant 0 \) we have the
structural equations
\begin{gather*}
  da = 0 = d(Ja) = db,\quad d(Jb) = u_1aJa+u_3(aJb+bJa)+w_1bJb,
\end{gather*}
with \( u_3 = \sqrt{u_1w_1} \), \( u_1\geqslant0 \).  This is Kähler only
if \( u_1 = 0 \).  Up to homothety the only parameter is~\( u_1 \).  The
moduli space is thus connected.

\subsection{Two-dimensional derived algebra}
\label{sec:2d}

For \( \g' = \bR^2 \), we have \( \dim W_1 = 2 \), and we shall distinguish
between the cases \( W_1 = JW_1 \) and \( W_1 \cap JW_1 = \{0\} \) where \(
W_1 = \ker d \) is complex or real.

\subsubsection{Complex kernel}
\label{sec:2dC}

We have \( W_1 = JW_1 \) and taking \( V_2 = W_1 \) thus have the
structural equations
\begin{gather*}
  da = 0 = d(Ja),\\
  db = y_1aJa+y_3aJb+z_2JaJb,\\
  d(Jb) = u_1aJa-y_3ab+z_2bJa
\end{gather*}
with no restrictions on the coefficients other than that \( db \) and \(
d(Jb) \) are linearly independent.  Rotating \( a,Ja \) we may put \( z_2 =
0 \), \( y_3>0 \).  Rotating \( b,Jb \) we can then get \( u_1\geqslant 0
\), \( y_1 = 0 \), reducing the structure to
\begin{equation*}
  da = 0 = d(Ja),\quad
  db = y_3aJb,\quad
  d(Jb) = u_1aJa-y_3ab.
\end{equation*}
The solution is Kähler if and only if \( u_1 = 0 \).  The \SKT moduli space
is connected of dimension \( 1 \) modulo homotheties.  The Lie algebra \(
\g \) is isomorphic to \( \bR\times\lr'_{3,0} \).

\subsubsection{Real kernel}
\label{sec:2dR}

Here \( W_1 \cap JW_1 = \{0\} \) and we again take \( V_2 = W_1 \) putting
us in the real case and giving the structural equations
\begin{gather*}
  da = 0 = db,\\
  d(Ja) = x_1aJa+x_3(aJb+bJa)+y_2bJb,\\
  d(Jb) = u_1aJa+u_3(aJb+bJa)+v_2bJb,
\end{gather*}
where the last two lines are linearly independent and the coefficients
satisfy
\begin{equation}
  \label{eq:2dR}
  \begin{gathered}
    (x_1-u_3)y_2 = (-v_2+x_3)x_3,\quad u_1(v_2-x_3) = u_3(u_3-x_1),\\
    u_3x_3 = u_1y_2,\quad (u_1-x_3)(v_2+x_3) = (u_3+x_1)(u_3-y_2).
  \end{gathered}
\end{equation}

\begin{lemma}
  We have \( \z(\g) = \{0\} \) and \( \um(\g) \cong \lr_{3,-1} \), so \(
  \g\cong \affR\times\affR \).
\end{lemma}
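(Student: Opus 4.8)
The plan is to read off the Lie brackets from the structural equations via \( da(X,Y) = -a([X,Y]) \), reduce the statement to the study of a commuting pair of \( 2\times2 \) matrices, and then recognise \( \g \) through its unimodular kernel as in the observations following Table~\ref{tab:4solv}. Writing \( \{A,A',B,B'\} \) for the basis of \( \g \) dual to \( \{a,Ja,b,Jb\} \), the conditions \( da = db = 0 \) show that \( \g' = \langle A',B'\rangle \) is an abelian two-plane, while the remaining equations exhibit \( A,B \) acting on \( \g' \) by
\[
  M_1 = \ad_A|_{\g'} = \begin{pmatrix} -x_1 & -x_3 \\ -u_1 & -u_3\end{pmatrix},\qquad
  M_3 = \ad_B|_{\g'} = \begin{pmatrix} -x_3 & -y_2 \\ -u_3 & -v_2\end{pmatrix}.
\]
A direct check identifies the first three relations of~\eqref{eq:2dR} with \( [M_1,M_3] = 0 \), so everything reduces to a commuting pair with \( \im M_1 + \im M_3 = \g' \).

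First I would compute the centre. An element \( \alpha A + \gamma B + Y \) with \( Y\in\g' \) is central precisely when \( \alpha M_1 + \gamma M_3 = 0 \) and \( M_1Y = M_3Y = 0 \). I would rule out the first possibility using the hypothesis that \( d(Ja) \) and \( d(Jb) \) are linearly independent: a relation \( \alpha M_1 + \gamma M_3 = 0 \) forces both coefficient triples \( (x_1,x_3,y_2) \) and \( (u_1,u_3,v_2) \) to be orthogonal to \( (\alpha,\gamma,0) \) and \( (0,\alpha,\gamma) \), hence both proportional to \( (\gamma^2,-\alpha\gamma,\alpha^2) \) and so to each other — contradicting independence. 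The second possibility is excluded because two commuting singular \( 2\times2 \) matrices with a common kernel vector have proportional images, which would make \( \im M_1 + \im M_3 \) at most one-dimensional. Thus \( \z(\g) = \{0\} \), and by Table~\ref{tab:4solv} the algebra \( \g \) is one of \( \affR\times\affR \), \( \affC \), \( \ld_{4,1} \).

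To finish I would compute \( \um(\g) \). All three candidates are non-unimodular, so \( \chi\ne0 \); since \( \chi(A') = \chi(B') = 0 \), we get \( \um(\g) = \g'\oplus\bR Z \) with \( Z = (x_3+v_2)A - (x_1+u_3)B \), and \( N = \ad_Z|_{\g'} \) is trace-free because \( Z\in\ker\chi \). The crucial point is that the last relation of~\eqref{eq:2dR} is exactly the equality \( N_{12} = N_{21} \), i.e.\ the symmetry of \( N \). A trace-free symmetric \( N \) has real eigenvalues \( \pm\sqrt{-\det N} \) with \( \det N\leqslant0 \), and \( \det N = 0 \) only for \( N = 0 \); but \( N\ne0 \), since otherwise \( Z \) would be a non-zero central element. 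Hence \( N \) has two distinct real eigenvalues of opposite sign, so after rescaling \( Z \) we obtain \( \um(\g) \cong (0,21,-31) = \lr_{3,-1} \). By the recognition observation for \( \g' = \bR^2 \), \( \z(\g) = \{0\} \), this forces \( \g\cong\affR\times\affR \).

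The main obstacle is precisely this last identification: the sign of \( \det N \) is what separates the three surviving algebras (\( \det N < 0,\ >0,\ =0 \) give \( \lr_{3,-1},\ \lr'_{3,0},\ \h_3 \), i.e.\ \( \affR\times\affR,\ \affC,\ \ld_{4,1} \)), and it is not controlled by the Jacobi identity alone, since both \( \affC \) and \( \ld_{4,1} \) satisfy the first three relations of~\eqref{eq:2dR}. The decisive ingredient is the final relation, and the non-obvious step is recognising that it is equivalent to the symmetry of \( N \), which is what guarantees real eigenvalues and hence \( \det N < 0 \) once \( N\ne0 \) is known.
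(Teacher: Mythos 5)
Your proof is correct, and its second half follows a genuinely different route from the paper's. The centre computation is essentially the paper's argument in matrix form: the paper shows that a central element forces \( (p,q,0)^T \) and \( (0,p,q)^T \) into the null space of the rank-two matrix \( Q = \left(\begin{smallmatrix} x_1&x_3&y_2\\ u_1&u_3&v_2 \end{smallmatrix}\right) \), which is precisely your pair of conditions \( \alpha M_1+\gamma M_3 = 0 \) and \( M_1Y = M_3Y = 0 \) (your commuting-singular-matrices argument for the second is a mild variant of the same null-space count). Where you genuinely diverge is the identification of \( \um(\g) \). The paper reads \eqref{eq:2dR} as parallelism conditions on the vectors \( \mathbf a = (x_1,x_3)^T \), \( \mathbf b = (x_3,y_2)^T \), \( \mathbf c = (u_1,u_3)^T \), \( \mathbf d = (u_3,v_2)^T \), proves \( \chi\ne0 \) directly (\( \chi = 0 \) would force \( \mathbf a = -\mathbf d \), contradicting \( \rank Q = 2 \)), rotates \( a \) into \( \im\chi^* \) and normalises \( Q \) explicitly, reading off the eigenvalues \( \pm\ell s \) with which \( B \) acts. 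Your key observation---that the fourth (SKT) relation of \eqref{eq:2dR} is exactly the symmetry \( N_{12} = N_{21} \) of the trace-free matrix \( N = \ad_Z|_{\g'} \); it is the same bilinear identity the paper phrases as \( (\mathbf b-\mathbf c)\parallel(\mathbf a+\mathbf d) \)---lets the spectral theorem replace the paper's normal-form computation: symmetric, trace-free and nonzero forces two real eigenvalues of opposite sign, hence \( \um(\g)\cong\lr_{3,-1} \). This is more invariant and explains cleanly why the SKT equation, and not just Jacobi, excludes \( \affC \) and \( \ld_{4,1} \) (these correspond to \( \det N>0 \) and to \( N \) nilpotent). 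Two trade-offs: the paper's explicit normalisation is reused immediately after the lemma (via the half-angle substitution) to parameterise the moduli space and count its dimension and components, which your argument does not supply; and where the paper derives non-unimodularity from the structure equations alone, you import it from the classification of \cite{Andrada-BDO:four} (none of the three candidate algebras is unimodular)---legitimate, since that classification is an external input to the paper, but slightly less self-contained.
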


\begin{proof}
  We compute the centre via \( \z(\g) = \{\,X\in\g:X\hook d\alpha = 0\text{
  for all \( \alpha\in\g^* \)}\,\} \).  Writing \( X = pA +qB + p'JA + q'JB
  \), where \( \{A,B,JA,JB\} \) is the dual basis to \( \{a,b,Ja,Jb\} \),
  one finds that \( X\in\z(\g) \) implies \( (p,q,0)^T \) and \( (0,p,q)^T
  \) lie in the one-dimensional null space of the rank two matrix
  \begin{equation*}
    Q =
    \begin{pmatrix}
      x_1&x_3&y_2\\
      u_1&u_3&v_2
    \end{pmatrix}
    .
  \end{equation*}
  We conclude that \( p = 0 = q \).  The same calculation applies to \( p'
  \) and \( q' \), so \( X = 0 \) and \( \z(\g) = \{0\} \).

  Writing \( \mathbf a = \left(
    \begin{smallmatrix}
      x_1\\x_3
    \end{smallmatrix}
  \right) \), \( \mathbf b = \left(
    \begin{smallmatrix}
      x_3\\y_2
    \end{smallmatrix}
  \right) \), \( \mathbf c = \left(
    \begin{smallmatrix}
      u_1\\u_3
    \end{smallmatrix}
  \right) \), \( \mathbf d = \left(
    \begin{smallmatrix}
      u_3\\v_2
    \end{smallmatrix}
  \right) \), equations~\eqref{eq:2dR} may be interpreted geometrically as
  saying that \( \mathbf b \), \( \mathbf c \) and \( \mathbf a-\mathbf d
  \) are mutually parallel and that \( \mathbf b-\mathbf c \) is parallel
  to \( \mathbf a + \mathbf d \).  Imposing the constraint \( \rank Q = 2
  \), then leads to the fact that \( \mathbf a \) and \( \mathbf d \) are
  linearly independent.

  The map \( \chi = \tr\ad\colon \g\to\bR \) is given by \( \chi(A) =
  -(x_1+u_3) \), \( \chi(B) = -(x_3+v_2) \), \( \chi(JA) = 0 = \chi(JB) \).
  This is zero only if \( \mathbf a = -\mathbf d \), which by the above
  remark, is not possible.  Thus \( \g \) is not unimodular.  Choosing \(
  a\in \im\chi^* \leqslant\ker d \), we have \( 0 = a(B) \varpropto \chi(B)
  \) and so \( v_2 = -x_3 \).

  Write \( \mathbf a-\mathbf d = 2k\mathbf v \) with \( \mathbf v = \left(
    \begin{smallmatrix}
      c\\s
    \end{smallmatrix}
  \right) \), \( c^2+s^2 = 1 \).  Then \eqref{eq:2dR} implies \( \mathbf
  b,\mathbf c\in \Span{\mathbf v} \).  However \( \mathbf a+\mathbf
  d\notin\Span{\mathbf v} \) but is parallel to \( \mathbf b-\mathbf c \),
  so we find \( \mathbf b = \mathbf c = h\mathbf v \), for some \( h\in\bR
  \).  This gives \( x_3 = ks = hc \), so we may write \( k = \ell c \), \(
  h = \ell s \) for some non-zero \( \ell\in\bR \).  Changing the sign of
  \( \mathbf v \) we may force \( \ell >0 \).  We get
  \begin{equation*}
    Q = \ell
    \begin{pmatrix}
      c^2+1&cs&s^2\\
      cs&s^2&-cs
    \end{pmatrix}
    .
  \end{equation*}
  The last two columns specify the exterior derivative \( d \) on \(
  \um(\g)^*\cong \g^*/\im\chi^* \).  One sees that \( \um(\g)\cong
  \lr_{3,-1} \) as \( B \) acts with eigenvalues \( \pm\ell s \).
\end{proof}

We may describe the isomorphism of~\( \g \) with \( \affR\times\affR \)
explicitly by introducing half-angles.  Writing \( c = \sigma^2-\tau^2 \),
\( s = 2\sigma\tau \), \( \sigma^2+\tau^2 = 1 \), \( \sigma >0 \) and using
the orthogonal transformation \( a' = \sigma a + \tau b \), \( b' = -\tau a
+ \sigma b \), gives the structural equations
\begin{equation*}
  d(Ja') = 2\ell\sigma\, a'Ja',\qquad d(Jb') = -2\ell\tau\, b'Jb'.
\end{equation*}
We have \( \ell,\sigma>0 \) and, replacing \( b' \) by \( -b' \) if
necessary, we may ensure that \( \tau<0 \).  The \SKT moduli space is thus
parameterised by \( \sigma/\tau\in (-1,0) \), \( \ell>0 \) and the
parameter \( t = g(b',Ja')\in (-1,1) \) in the metric.  Up to homotheties
it is connected of dimension~\( 2 \).  The solutions are Kähler precisely
when \( t = 0 \).

\begin{remark}
  \label{rem:c-vs-SKT}
  If one considers the complex structure on \( \affR\times\affR \) with \(
  da = 0 \), \( d(Ja) = aJa \), \( db = 0 \), \( d(Jb) = bJb \) one sees
  that a metric with \( \omega = aJa + bJb + t(aJb+bJa) \) is \SKT (indeed
  Kähler) only if \( t = 0 \).  Thus for a given complex structure the \SKT
  condition depends on the choice of metric.  This is in contrast to the
  study of \SKT structures on six-dimensional
  nilmanifolds~\cite{Fino-PS:SKT}.
\end{remark}

\subsection{Three-dimensional Abelian derived algebra}
\label{sec:3dA}
For \( \g' = \bR^3 \), we have \( \dim W_1 = 1 \), and moreover the
assumption that \( \g' \) is Abelian implies that \( d(Ja),\,db,\,d(Jb) \in
\I(a) \). So it is legitimate to assume that \( V_2 = JV_2 \). The
structural equations are thus
\begin{gather*}
  da = 0,\quad d(Ja) = x_1aJa,\\
  db = y_1aJa+y_2ab+y_3aJb,\quad d(Jb) = u_1aJa-y_3ab+y_2aJb.
\end{gather*}
with coefficients satisfying the equation
\begin{equation*}
  0 = y_2(2y_2+x_1)
\end{equation*}
and non-degeneracy conditions \( x_1\ne0 \), \( y_2^2+y_3^2\ne0 \).  One
may choose \( a,b \) so that \( x_1>0 \), \( y_1\geqslant 0 \) and \( u_1 =
0 \).  The solutions are then Kähler only if \( y_1 \) and \( y_2 \) are
zero.

If \( y_2 = 0 \), then \( y_3\ne0 \) and \( \g\cong
\lr'_{4,\abs{x_1/y_3},0} \).  Thus the \SKT moduli up to homothety has
dimension~\( 1 \), parameter~\( y_3 \), has two connected components
determined by the sign of~\( y_3 \), and contains the Kähler solutions
as~\( y_1 = 0 \).

For \( y_2\ne0 \), we have \( x_1 = -2y_2 \).  There are two cases.  For \(
y_3 = 0 \), we have \( \g \cong \lr_{4,-1/2,-1/2} \) and there is a
one-dimensional connected family of solutions up to homothety.  For \(
y_3\ne0 \), the Lie algebra \( \g \) is \( \lr'_{4,2\lambda,-\lambda} \)
with \( \lambda = \abs{y_2/y_3} \).  Again the moduli is of dimension \( 1
\) up to homothety and has two connected components.

\subsection{Three-dimensional non-Abelian derived algebra}
\label{sec:3dnA}

For \( \g' = \h_3 \), as above we have \( \dim W_1 = 1 \).  Let \( d' \)
denote the exterior derivative on~\( \g' \).  We distinguish between the
complex and real cases \( V_2 = JV_2 \) and \( V_2 \cap JV_2 = \{0\} \).

\subsubsection{Complex case}
\label{sec:3dnAC}

We have \( a\in W_1 = V_1 \), and \( Ja\in V_2 = JV_2 \).  Moreover it is
possible to take \( b\in V_2^{\perp} \) with \( d'b = 0 \).  The condition
\( \g'\cong \h_3 \) then forces \( d'(Jb) \in \Span{bJa} \), giving the
structural equations
\begin{gather*}
  da = 0,\quad
  d(Ja) = x_1aJa,\\
  db = y_1aJa+y_2ab+y_3aJb,\quad d(Jb) = u_1aJa+u_2ab+u_3aJb+v_1bJa,
\end{gather*}
with \( x_1 \), \( y_2^2+y_3^2 \) and \( v_1 \) non-zero.  Adjusting the
choice of \( a \), we may take \( x_1>0 \).  The \SKT equations are now the
vanishing of
\begin{gather*}
  y_2-u_3+v_1,\quad y_3+u_2,\quad y_3v_1,\\
  v_1(x_1+y_2-u_3),\quad (y_2+u_3)(y_2+u_3+x_1).
\end{gather*}
We deduce that \( y_3 = 0 = u_2 \), \( v_1 = x_1 \) and \( u_3 = y_2+x_1
\), leaving the condition \( (2y_2+x_1)(y_2+x_1) = 0 \).

If \( y_2 = -x_1 \), then the structural equations are
\begin{gather*}
  da = 0,\quad
  d(Ja) = x_1aJa,\\
  db = y_1aJa-x_1ab,\quad d(Jb) = u_1aJa+x_1bJa
\end{gather*}
subject only to \( x_1>0 \).  We see that \( \g/{\z(\g')} \) is isomorphic
to \( \lr_{3,-1} \), so \( \g \) itself is isomorphic to \( \ld_4 \).  The
\SKT moduli modulo homotheties is connected and has dimension~\( 2 \).
There are no Kähler solutions.

For \( x_1 = -2y_2 \), we have the structural equations
\begin{gather*}
  da = 0,\quad
  d(Ja) = x_1aJa,\\
  db = y_1aJa-\tfrac12x_1ab,\quad d(Jb) = u_1aJa+\tfrac12x_1aJb+x_1bJa,
\end{gather*}
again with \( x_1>0 \).  The quotient \( \g/{\z(\g')} \) is isomorphic to
\( \lr_{3,-1/2} \), and \( \g \) is thus isomorphic to \( \ld_{4,2} \).
The solutions are Kähler only for \( y_1 = 0 = u_1 \).  Again the \SKT
moduli space up to homotheties is connected of dimension~\( 2 \).
	
\subsubsection{Real case}
\label{sec:3dnAR}

First note that \( \dim W_2 = 3 \), so we may choose \( b \) to be a unit
vector in \( W_2 \cap \Span{a,Ja}^\bot \).  This gives \( t = g(b,Ja) = 0
\).  Now \( d'b = 0 \), where \( d' \) is the differential on \( \g' \), as
above.  As \( \h_3' = \bR \), we have that \( d'(Ja) \) and \( d'(Jb) \)
are linearly dependent, but not both zero.  In fact, if \( d'(Ja) = 0 \),
we may take \( V_2 = \Span{a,Ja} \) and reduce to the complex case
of~\S\ref{sec:3dnAC}, so we assume instead \( d'(Ja)\ne0 \).

Write \( (x_2,x_3,y_2) = m\mathbf p \), \( (w_1,v_1,v_2) = n\mathbf p \)
for some unit vector \( \mathbf p = (p,q,r) \), \( m\ne0 \).  The
structural equations of \( \h_3 \), imply \( b\wedge d'x = 0 \) is zero for
all \( x\in \g' \), giving \( p = 0 \) and \( x_2 = 0 = w_1 \).  Now \(
q^2+r^2 = 1 \) and one may normalise so that \( r\geqslant0 \).  Then
\begin{equation*}
  d'(Ja) = m\,bJc,\quad d'(Jb) = n\,bJc,
\end{equation*}
where
\begin{equation*}
  c = qa+rb.
\end{equation*}
From this one sees \( d'(nJa-mJb) = 0 \) and so \( (nJa-mJb)\wedge d'x = 0
\) is zero too.  We conclude that \( qJa+rJb \) and \( nJa-mJb \) are
parallel and write \( n = kq \), \( m = -kr \), for some \( k\ne0 \).

The structural equations are now
\begin{gather*}
  da = 0,\quad
  d(Ja) = x_1aJa-kqr(aJb+bJa)-kr^2\,bJb,\\
  db = z_1aJa+z_2ab+z_3aJb,\\
  d(Jb) = u_1aJa+u_2ab+u_3aJb+kq^2\,bJa+kqr\,bJb,
\end{gather*}
with \( q^2+r^2 = 1 \), \( r>0 \), the forms \( d(Ja) \), \( db \), \(
d(Jb) \) non-zero, and subject to
\begin{equation}
  \label{eq:h3}
  \begin{gathered}
    u_3 = z_2 + kq,\quad u_2 = -z_3,\quad rz_1 = qz_3,\\
    kq^3-qz_2-ru_1 = 0,\quad 2kq^2+x_1-z_2-u_3 = 0,\\
    q(q(x_1+z_2-u_3)-2ru_1) = 0,\quad (x_1+z_2+u_3)(z_2+u_3+kr^2) = 0.
  \end{gathered}
\end{equation}

Substituting the first three equations in to the remaining four, one sees
that the first equation on the last line follows from the two on the middle
line.  There are thus two cases corresponding to the two factors of the
last equation.

The first case is \( z_2 = -x_1-u_3 \), which reduces to \( x_1 = -kq^2 = -
u_3 \), \( z_2 = 0 \), \( u_1 = kq^3/r \), giving the structural equations
\begin{gather*}
  da = 0,\quad d(Ja) = -k\,cJc,\quad db = z_3r^{-1}\,aJc,\quad d(Jb) =
  -z_3\,ab + kqr^{-1}\,cJc,
\end{gather*}
Now \( \tilde\g^* = \g/\z(\g')^*\cong \Span{a,b,c} \), with \( c' = c/r \),
has structural equations \( \tilde d a = 0 \), \( \tilde d b = z_3 ac' \),
\( \tilde d c' = -z_3 ab \) and so is isomorphic to \( \lr'_{3,0} \).  This
gives \( \g\cong \ld'_{4,0} \).

In this case the solutions are never Kähler.  The \SKT moduli up to
homotheties has dimension~\( 2 \) and is connected.  To see this note that
\( a \) is specified up to sign, which may be fixed by requiring \( k>0 \),
and replacing \( b \) by \( \pm b \), we may then ensure \( z_3>0 \).  If
\( q\ne0 \) this uniquely specifies \( b \), and the remaining parameter is
given by~\( q \).  For \( q = 0 \), we may rotate in the \( b,Jb \) plan,
but this does not change the solution.

The final case is \( z_2= -u_3-kr^2 \).  Here one finds \( x_1 = -k(1+q^2)
\), \( z_2 = -k/2 \), \( u_1 = - kq(2q^2+1)/2r \) giving
\begin{equation}
  \label{eq:h3last}
  \begin{gathered}
    da = 0,\quad d(Ja) = -k (aJa + cJc),\quad
    db = -\tfrac12k\,ab+z_3r^{-1}\,aJc,\\
    d(Jb)= \tfrac12kr^{-1}\, a(qJa-rJb) -z_3\,ab + kqr^{-1}\,cJc.
  \end{gathered}
\end{equation}
This time computing the structural equations for \( \tilde \g = \g/\z(\g')
\) gives \( \tilde d a= 0 \), \( \tilde d b = -\tfrac12kab+z_3ac' \), \(
\tilde d c' = -z_3ab-\tfrac12k ac' \).  If \( z_3\ne0 \), we have \(
\tilde\g \cong \lr'_{3,\lambda} \) with \( \lambda = \abs{k/2z_3} \) giving
\( \g\cong \ld'_{4,\lambda} \).  For \( z_3 = 0 \), we have \(
\tilde\g\cong \lr_{3,1} \) and \( \g \cong \ld_{4,1/2} \).

The solutions are Kähler precisely when \( q = 0 \).  The \SKT moduli up to
homotheties has dimension~\( 1 \) and is connected both for \( \g =
\ld'_{4,\lambda} \) and for \( \g = \ld_{4,1/2} \).

This completes the proof of Theorem~\ref{thm:clsfskt1}.

\section{Consequences and concluding remarks}
\label{sec:consequences}

Let us first emphasise Remark~\ref{rem:c-vs-SKT} that for four-dimensional
solvable groups the \SKT condition depends explicitly on both the metric
and the complex structure, in contrast to the situation~\cite{Fino-PS:SKT}
for six-dimensional nilpotent groups.

\begin{corollary}
  There are four-dimensional solvable complex Lie groups whose family of
  compatible invariant Hermitian metrics contains both \SKT and non-\SKT
  structures.
\end{corollary}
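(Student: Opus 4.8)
The plan is to extract the corollary almost verbatim from Remark~\ref{rem:c-vs-SKT}, so that none of the case-by-case analysis of Theorem~\ref{thm:clsfskt1} is needed beyond the single group \( \affR\times\affR \). First I would fix the simply-connected group \( G \) with Lie algebra \( \affR\times\affR \) and equip it with the invariant complex structure \( J \) given by \( da=0 \), \( d(Ja)=aJa \), \( db=0 \), \( d(Jb)=bJb \). A one-line check that \( d^2=0 \) (so that this really is a Lie algebra) together with the observation that each \( d \) of a \( (1,0) \)-form lands in \( \Lambda^{1,1} \) shows \( J \) is a genuine integrable complex structure; I would interpret ``complex Lie group'' here in the sense of this paper, namely a solvable Lie group carrying an invariant complex structure (the structure above is not bi-invariant). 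I would then describe the relevant piece of the family of compatible invariant Hermitian metrics: the one-parameter subfamily with fundamental form \( \omega_t = aJa+bJb+t(aJb+bJa) \), which by the Cauchy--Schwarz bound \( \abs{t}<1 \) recorded in Lemma~\ref{lem:struct} is positive definite, hence genuinely Hermitian for \( J \), for every \( t \) with \( \abs{t}<1 \).

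The heart of the argument is then the four-dimensional \SKT test \( dJd\omega_t=0 \). Using \( da=db=0 \) one finds \( d\omega_t = t\,ab(Ja-Jb) \); applying \( J \) factor-by-factor (with the sign convention \( Jd\omega=-d\omega(J\cdot,J\cdot,J\cdot) \)) and differentiating once more gives \( dJd\omega_t = -2t\,aJabJb \), a nonzero multiple of the volume form precisely when \( t\neq 0 \). Consequently \( \omega_0 \) is \SKT --- in fact Kähler, as already noted in Remark~\ref{rem:c-vs-SKT} --- while every \( \omega_t \) with \( 0<\abs{t}<1 \) is Hermitian but not \SKT. Since all of these \( \omega_t \) are compatible with the one fixed complex structure \( J \), the family of compatible invariant Hermitian metrics on \( (G,J) \) contains both \SKT and non-\SKT members, which is exactly the assertion.

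I do not expect a genuine obstacle: the corollary is essentially a repackaging of Remark~\ref{rem:c-vs-SKT}, and the only points requiring care are bookkeeping ones --- keeping the sign conventions for the action of \( J \) on three-forms consistent, and confirming positive-definiteness of \( \omega_t \) for \( \abs{t}<1 \). The one conceptual remark worth flagging is that this phenomenon is special to the non-bi-invariant complex structure of the example and stands in sharp contrast to the six-dimensional nilpotent case of \cite{Fino-PS:SKT}, where the \SKT condition is independent of the chosen compatible metric; it is precisely this contrast that the corollary is meant to record.
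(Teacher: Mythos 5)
Your proposal is correct and is essentially the paper's own proof: the corollary rests precisely on the example of Remark~\ref{rem:c-vs-SKT} (\( \affR\times\affR \) with \( da=0 \), \( d(Ja)=aJa \), \( db=0 \), \( d(Jb)=bJb \) and the metrics \( \omega_t=aJa+bJb+t(aJb+bJa) \)), and your computation showing that \( dJd\omega_t \) is a nonzero multiple of the volume form exactly when \( t\neq0 \) merely makes explicit what the Remark asserts. The only cosmetic inaccuracy is citing Lemma~\ref{lem:struct} for the bound \( \abs{t}<1 \): that lemma records the bound for the cross term \( t(ab+JaJb) \), whereas your family uses \( t(aJb+bJa) \), but the identical Cauchy--Schwarz/Gram-matrix argument gives positive definiteness of \( \omega_t \) for \( \abs{t}<1 \).
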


An alternative approach to our classification of invariant \SKT structures
in Theorem~\ref{thm:clsfskt1} would be to start with results for complex
structures on four-dimensional solvable Lie groups
(Ovando~\cite{Ovando:complex-solvable,Ovando:4},
Snow~\cite{Snow:complex-solvable}) and then to impose the \SKT condition.
We have used this approach to cross check our results, but also found that
the lists given in \cite{Ovando:4} for Kähler forms and algebras with
complex structures have some errors and omissions.  Some of these are
corrected in~\cite{Andrada-BDO:four}, but we wish to emphasise that the
proof given in~\S\ref{sec:clsfskt} is independent of those calculations.
In contrast to the compact case we see:

\begin{corollary}
  The four-dimensional solvable Lie algebras~\( \g \) that admit invariant
  complex structures but no compatible invariant \SKT metric are: \( \bR
  \times \lr_{3,1} \), \( \bR \times \lr'_{3,\lambda>0} \), \( \aff_\bC \),
  \( \lr_{4,1} \), \( \lr_{4,\mu,\lambda} \), \textup{(}\( \mu =
  \lambda\ne-\tfrac12 \) or \( \mu\leqslant\lambda=1 \)\textup{)}, \(
  \lr'_{4,\mu,\lambda} \) \( (\lambda\ne0,-\mu/2) \), \( \ld_{4,\lambda} \)
  \( (\lambda\ne\tfrac12,2) \), \( \h_4 \).  \qed
\end{corollary}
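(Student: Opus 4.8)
The statement is a corollary of the main classification, so the plan is to read it off as a set difference. By Theorem~\ref{thm:clsfskt1} the algebras listed in Table~\ref{tab:4solvskt} are \emph{exactly} those admitting a compatible invariant \SKT metric; consequently a four-dimensional solvable \( \g \) admits an invariant complex structure but no compatible invariant \SKT metric precisely when it occurs on the complex-structure classification yet is absent from Table~\ref{tab:4solvskt}. The whole argument thus reduces to assembling the list of such \( \g \) carrying an invariant complex structure and then deleting the entries of Table~\ref{tab:4solvskt}.

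For the complex-structure input I would not quote \cite{Ovando:4,Snow:complex-solvable} verbatim, since the paper records that their complex-structure and Kähler lists contain errors; instead I would recover the classification from within the present framework. Lemma~\ref{lem:struct} already builds integrability into the first line of \eqref{eq:condC} and of \eqref{eq:condR}, so imposing that line together with the Jacobi identity \( d^2 = 0 \) (every line of \eqref{eq:condC} and \eqref{eq:condR} except the final \SKT line) and rerunning the case analysis of \S\ref{sec:clsfskt} with the \SKT constraint dropped produces every \( \g \) admitting an invariant \( J \). Comparing against the corrected list of \cite{Andrada-BDO:four} provides an independent check. I would then organise the comparison by \( \g' \), and where needed by \( \z(\g) \), exactly as in Table~\ref{tab:4solv}, matching each algebra or parameter family against Table~\ref{tab:4solvskt} in turn.

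The main obstacle is bookkeeping over parameters rather than any single hard computation, because several families admit an invariant complex structure throughout a whole parameter range but meet the \SKT equations only at isolated values, so the subtraction has to be performed parameter-by-parameter. Thus \( \lr_{4,\mu,\lambda} \) survives except at the \SKT point \( \lr_{4,-1/2,-1/2} \), producing the condition \( \mu=\lambda\ne-\tfrac12 \) alongside the separate complex locus \( \mu\leqslant\lambda=1 \); \( \ld_{4,\lambda} \) survives away from the \SKT values \( \lambda=\tfrac12,2 \); and \( \lr'_{4,\mu,\lambda} \) survives away from \( \lambda=0,-\mu/2 \). The same vigilance is needed for the product algebras, where \( \bR\times\lr'_{3,0} \) is \SKT while \( \bR\times\lr'_{3,\lambda>0} \) is not, and \( \bR\times\lr_{3,0} \) is \SKT while \( \bR\times\lr_{3,1} \) is not, so only the latter members enter the corollary. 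Finally I would keep \( \affC \) distinct from \( \affR\times\affR \) using the invariants recorded after Table~\ref{tab:4solv}—complete solvability and the unimodular kernel \( \um(\affC)\cong\lr'_{3,0} \)—so that the non-completely-solvable \( \affC \), which admits a complex structure but no \SKT metric, is correctly placed on the list.
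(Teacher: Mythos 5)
Your proposal is correct and is essentially the paper's own (implicit) argument: the corollary is read off as the set difference between the classification of four-dimensional solvable Lie algebras admitting invariant complex structures and the \SKT list of Theorem~\ref{thm:clsfskt1}/Table~\ref{tab:4solvskt}, with the bookkeeping organised by \( \g' \) and by parameter values within each family. The only divergence is in how the complex-structure input is sourced---the paper takes it from the literature (Ovando, Snow, as corrected in \cite{Andrada-BDO:four}) and uses its own machinery of \S\ref{sec:clsfskt} as a cross-check, whereas you reverse these roles by proposing to re-run \S\ref{sec:clsfskt} with the final \SKT line of \eqref{eq:condC} and \eqref{eq:condR} dropped---which is sound and more self-contained, but substantially more laborious than the comparison the paper actually performs.
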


\noindent
Here the given constraints on the parameters are in addition to the
defining constraints for the algebras.

On the other hand if \( G \) admits a discrete co-compact subgroup \(
\Gamma \) then \( M = \Gamma\backslash G \) is a compact manifold (a
solvmanifold).  By Gauduchon's Theorem~\cite{Gauduchon:torsion} any complex
structure on~\( M \) admits an \SKT metric (indeed one in any compatible
conformal class).  If \( G \) has an invariant complex structure one may
then construct a compatible invariant \SKT structure on \( G \) via
pull-back from~\( M \) (cf. \cite{Fino-G:SU-Sp}).  A necessary condition
for \( \Gamma \) to exist is that \( G \) be unimodular, which is
equivalent to \( b_4(\g) = 1 \), but in general this is not sufficient.
The correct classification of complex solvmanifolds in dimension four has
recently been provided by Hasegawa \cite{Hasegawa:complex-kaehler}.  In our
notation, one obtains
\begin{inparaenum}
\item tori from \( \g = \bR^4 \),
\item primary Kodaira surfaces from \( \g = \bR\times \h_3 \),
\item hyperelliptic surfaces from \( \g = \bR\times \lr'_{3,0} \),
\item Inoue surfaces of type \( S^0 \) from \( \g =
  \lr_{4,-\frac12,-\frac12} \) and from \( \g = \lr'_{4,2\lambda,-\lambda}
  \),
\item Inoue surfaces of type \( S^\pm \) from \( \g = \ld_4 \) and
\item secondary Kodaira surfaces from \( \g = \ld'_{4,0} \).
\end{inparaenum}
Comparing this list with our classification we conclude:

\begin{corollary}
  Each unimodular solvable four-dimensional Lie group \( G \) with
  invariant \SKT structure admits a compact quotient by a lattice.  \qed
\end{corollary}

Recall that an \HKT structure is given by three complex structures \( I \),
\( J \), \( K = IJ = -JI \) with common Hermitian metric~ such that \(
Id\omega_I = Jd\omega_J = Kd\omega_K \).  If \( (g,I) \) is already \SKT
then \( (g,J) \) and \( (g,K) \) are necessarily \SKT and the \HKT
structure is strong.  However the list of \HKT structures on solvable Lie
groups is known in dimension four from \cite{Barberis:hc4}.

\begin{corollary}
  The only four-dimensional solvable Lie algebra that is strong \HKT is~\(
  \bR^4 \), which is hyperKähler.  The algebra \( \ld_{4,1/2} \) admits
  both \HKT and \SKT structures; these structures are distinct.  The
  remaining \HKT algebras \( \aff_\bC \) and \( \lr_{4,1,1} \) do not admit
  invariant \SKT structures. \qed
\end{corollary}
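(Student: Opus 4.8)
The plan is to cross Barberis's classification of invariant \HKT structures on four-dimensional solvable Lie groups~\cite{Barberis:hc4} with our own \SKT classification (Theorem~\ref{thm:clsfskt1} and Table~\ref{tab:4solvskt}). By~\cite{Barberis:hc4} the four-dimensional solvable Lie algebras carrying an invariant \HKT structure are exactly \( \bR^4 \), \( \ld_{4,1/2} \), \( \affC \) and \( \lr_{4,1,1} \), so it suffices to decide for each of these four whether its \HKT structure is strong, and, for \( \ld_{4,1/2} \), to compare the resulting structure with the \SKT structures of~\S\ref{sec:3dnAR}.

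The organising principle is that a strong \HKT structure is in particular \SKT for \( (g,I) \): the common torsion three-form equals \( Id\omega_I \), and strongness, \( d(Id\omega_I) = 0 \), is precisely the \SKT condition for the Hermitian structure \( (g,I) \) (the other two, \( (g,J) \) and \( (g,K) \), are then automatically \SKT, as recalled above). Consequently every algebra admitting a strong \HKT structure must appear in Table~\ref{tab:4solvskt}. First I would apply this to \( \affC \) and \( \lr_{4,1,1} \): neither occurs in the table, so neither supports \emph{any} invariant \SKT structure, whence their \HKT structures cannot be strong. This disposes of these two algebras and simultaneously proves the final sentence of the corollary. For \( \bR^4 \) one has \( d\equiv0 \), so all torsion vanishes; the \HKT structure is hyperKähler and thus trivially strong.

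There remains \( \ld_{4,1/2} \), the one algebra that is both \HKT and, by Table~\ref{tab:4solvskt}, \SKT. Here I would take Barberis's explicit invariant hypercomplex triple \( (I,J,K) \) and compatible metric on \( \ld_{4,1/2} \), rewrite them in a basis \( \{a,Ja,b,Jb\} \) adapted to the structural equations of~\S\ref{sec:SKT}, and then compute the torsion three-form \( Id\omega_I \) and its exterior derivative. I expect to find \( d(Id\omega_I)\ne0 \); this shows the \HKT structure is not strong, so \( (g,I) \) is not \SKT, and in particular not Kähler. Since the \SKT family attached to \( \ld_{4,1/2} \) in~\S\ref{sec:3dnAR} does contain a Kähler point, the \HKT structure cannot coincide with any member of that family, which is exactly the assertion that the two structures are distinct. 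Together with the previous paragraph this leaves \( \bR^4 \) as the only strong \HKT algebra.

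The main obstacle is precisely this last computation, and it cannot be bypassed by a soft argument. Unlike \( \affC \) and \( \lr_{4,1,1} \), the algebra \( \ld_{4,1/2} \) does admit invariant \SKT structures, and these are not all Kähler (they are Kähler only at the point \( q=0 \) of~\S\ref{sec:3dnAR}); moreover a strong \HKT structure need not be hyperKähler, so mere membership in Table~\ref{tab:4solvskt} does not decide strongness. The real work is therefore to pin down Barberis's conventions for \( (I,J,K) \) and the metric, translate them faithfully into our normal form, and verify by direct calculation that the torsion three-form fails to be closed.
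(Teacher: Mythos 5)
Your proposal is correct and has the same skeleton as the paper's argument: cross Barberis's list of invariant \HKT algebras \cite{Barberis:hc4} with Table~\ref{tab:4solvskt}, use the implication that a strong \HKT structure makes \( (g,I) \) an \SKT structure to rule out \( \affC \) and \( \lr_{4,1,1} \) (neither appears in the table), and observe that \( \bR^4 \) is hyperKähler, hence trivially strong. The one point of genuine divergence is \( \ld_{4,1/2} \). The paper settles it in the opposite logical direction: it uses the explicit \SKT family~\eqref{eq:h3last} (with \( z_3 = 0 \)) produced in \S\ref{sec:3dnAR} to check that the \HKT metric differs from every \SKT metric, and concludes from ``strong \( \Rightarrow \) \SKT'' that the \HKT structure is not strong. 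You instead propose to compute \( d(Id\omega_I)\ne0 \) directly for Barberis's structure, obtaining non-strongness first and distinctness as a consequence. Both verifications work and require the same input (Barberis's explicit hypercomplex triple and metric); yours has the small advantage that a single non-closure computation suffices, with no need to address equivalence of metrics under automorphisms of \( \ld_{4,1/2} \), while the paper's reuses the classification data it already has. One sentence of yours is a non sequitur: the claim that the \HKT structure cannot lie in the \SKT family ``since the \SKT family contains a Kähler point.'' The existence of a Kähler point is irrelevant; the correct (and immediate) inference is that a structure which is not \SKT cannot equal any member of a family consisting of \SKT structures. This slip is harmless, since the conclusion does follow at once from your computation, but the sentence should be deleted or corrected.
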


In the case of \( \ld_{4,1/2} \) one may use~\eqref{eq:h3last} to check
that the \HKT and \SKT metrics are different.

Finally, let us make the following observation which follows from
case-by-case study of the algebras found in our \SKT classification
Theorem~\ref{thm:clsfskt1}.

\begin{corollary}
  Each invariant \SKT structure on a four-dimensional solvable Lie group~\(
  G \) is invariant under a two-dimensional Abelian subgroup \( H \leqslant
  G \). \qed
\end{corollary}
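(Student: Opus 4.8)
The plan is to verify the claim by going through the same case-by-case list that powered the proof of Theorem~\ref{thm:clsfskt1}, but now extracting from each family of structural equations an explicit two-dimensional Abelian subalgebra $\h\leqslant\g$ that annihilates the \SKT data. The key observation is that a subgroup $H$ acts preserving the \SKT structure precisely when its Lie algebra $\h$ satisfies $X\hook dg = 0$ and $X\hook dJ = 0$ for all $X\in\h$; in the language already used for computing centres in~\S\ref{sec:2dR}, it suffices to exhibit, for each algebra in Table~\ref{tab:4solvskt}, a two-dimensional Abelian $\h$ whose elements act trivially (via the Lie derivative) on both $\omega$ and $J$. Since the structures are left-invariant, the natural candidate for $H$ is the subgroup generated by such an $\h$, and invariance of the metric and complex structure under $\h$ follows from $\h$-invariance at the Lie-algebra level.

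First I would handle the degenerate and low-codimension cases quickly: for $\bR^4$ every two-dimensional subspace works, and for the $\g' = \bR$ and $\g' = \bR^2$ families one reads off from the structural equations in~\S\ref{sec:1d} and~\S\ref{sec:2d} that the kernel of $d$ (which is $3$- or $2$-dimensional) contains a two-dimensional Abelian ideal acting trivially; for instance in the $\bR\times\lr_{3,0}$ and $\bR\times\lr'_{3,0}$ cases the span of the dual vectors to $\{a,Ja\}$ is an obvious candidate. For the $\g' = \bR^3$ families of~\S\ref{sec:3dA} and the $\g' = \h_3$ families of~\S\ref{sec:3dnAC} and~\S\ref{sec:3dnAR}, I would take the explicit reduced structural equations (such as~\eqref{eq:h3last}) and locate a two-dimensional Abelian subalgebra inside $\g'$: since $\g'$ is either Abelian or $\h_3$, it always contains a two-dimensional Abelian subspace, and one checks directly that a suitable choice of such a subspace lies in the common kernel of the relevant derivatives, hence acts by zero on $\omega$ and $J$.

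The main obstacle is not finding \emph{some} two-dimensional Abelian subalgebra — that is automatic from Remark~\ref{rem:4dsol} — but ensuring the chosen $\h$ simultaneously preserves \emph{both} $J$ and $g$, i.e.\ that $\mathcal L_X J = 0$ and $\mathcal L_X\omega = 0$ for $X\in\h$, and that this holds uniformly across every line of Table~\ref{tab:4solvskt}. The delicate entries are those in the real case of~\S\ref{sec:3dnAR}, where the off-diagonal metric parameter $t$ and the vector $c = qa+rb$ mix the basis, so that the naive coordinate subspaces need not be $J$-invariant; here I would use the reduced forms~\eqref{eq:h3last} after the normalisations $t = 0$, $r>0$ already established, and verify the invariance on the explicit generators. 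Because each of the finitely many families in the table has already been reduced to a short canonical list of structure constants, this reduces to a bounded, routine verification rather than a general structural argument, which is exactly why the statement is phrased as following from a case-by-case study.
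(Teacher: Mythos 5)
Your proposal is built on a reading of ``invariant under \( H \)'' that makes the statement false, so the case-by-case verification you outline cannot go through. Requiring \( \mathcal{L}_X\omega = 0 \) and \( \mathcal{L}_XJ = 0 \) for the \emph{left-invariant} vector fields \( X\in\h \), computed through the structure constants as you propose, is precisely invariance under \emph{right} translations by \( H \); it is equivalent to \( \ad(X) \) being \( g \)-skew-symmetric and \( J \)-commuting for every \( X\in\h \). For several entries of Table~\ref{tab:4solvskt} no two-dimensional such \( \h \) exists. The cleanest counterexample is \( \affR\times\affR \): with the structure of \S\ref{sec:2dR} the only non-zero brackets of the basis dual to \( \{a',Ja',b',Jb'\} \) are \( [A',JA'] = -2\ell\sigma\, JA' \) and \( [B',JB'] = 2\ell\tau\, JB' \), and a short computation shows \( \mathcal{L}_Xg = 0 \) forces \( X = 0 \) (consistently, the paper proves \( \z(\g) = \{0\} \) for this algebra). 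Similarly, for \( \bR\times\lr'_{3,0} \) with \( u_1\ne0 \), for \( \lr'_{4,\lambda,0} \) and for \( \ld_4 \), the space of \( X \) with \( \ad(X) \) skew-Hermitian is at most the one-dimensional centre. Your concrete candidates also fail on their own terms: in \( \bR\times\lr_{3,0} \) and \( \bR\times\lr'_{3,0} \) the duals of \( \{a,Ja\} \) satisfy \( [A,JA] = -u_1\,JB \ne 0 \) for the non-K\"ahler solutions, so they do not even span a subalgebra; and elements of \( \g' \) do not ``act by zero'' (nor by isometries) in general, as \( \g' \) is far from central in these algebras. (The expressions \( X\hook dg \) and \( X\hook dJ \) are also not meaningful, since \( g \) and \( J \) are not differential forms.)

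The corollary is intended with \( H \) acting by \emph{left} translations: this is the action that makes \( G\to H\backslash G \) a principal \( H \)-bundle over a surface, which is exactly the motivation stated after the corollary. A left-invariant tensor is invariant under left translations by \emph{any} subgroup, so the invariance costs nothing; the whole content, and the paper's implicit case-by-case proof, is that each algebra in Table~\ref{tab:4solvskt} contains a two-dimensional Abelian subalgebra \( \h \), whereupon \( H = \exp\h \) is a closed connected subgroup because \( G \) is simply connected and solvable. That existence is immediate from the classification: whenever \( \dim\g'\geqslant2 \), Remark~\ref{rem:4dsol} gives \( \g'\in\{\bR^2,\bR^3,\h_3\} \), each of which contains a two-dimensional Abelian subalgebra; in the remaining cases \( \bR^4 \) is Abelian, while \( \bR\times\h_3 \) and \( \bR\times\lr_{3,0} \) have two-dimensional centres. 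If you recast your write-up with the correct action — noting that the fundamental vector fields of the left \( H \)-action are right-invariant, so their Lie derivatives of any left-invariant tensor vanish identically — your ``main obstacle'' disappears and the proof reduces to this one-line algebraic observation.
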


This motivates a future study of \SKT structures on Abelian principal
bundles over Riemann surfaces.

\appendix

\section{SKT structures on compact Lie groups}
\label{sec:compact}

The existence of \SKT structures on compact even-dimensional Lie groups, is
briefly indicated in the introduction to \cite{Fino-PS:SKT}, and attributed
to~\cite{Spindel-STvP:complex}.  However, the result is not explicit in the
latter reference and neither specifies the complex structures.  We
therefore give a proof for reference.

\begin{proposition}
  Any even-dimensional compact Lie group~\( G \) admits a left-invariant
  \SKT structure.  Moreover each left-invariant complex structure on~\( G
  \) admits a compatible invariant \SKT metric.
\end{proposition}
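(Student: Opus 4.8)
The plan is to realise the structure from a bi-invariant metric, for which the Bismut torsion turns out to be a harmonic, hence closed, Cartan form. First I would fix an $\ad$-invariant inner product $g$ on $\g$; this exists because $G$ is compact, and it makes the Cartan three-form $\phi(X,Y,Z)=g([X,Y],Z)$ totally skew. Being bi-invariant, $\phi$ is harmonic and in particular $d\phi=0$. For the first assertion I would then produce a compatible complex structure by Samelson's construction: since $\dim G$ is even the rank of $\g$ is even, so one may choose an orthogonal complex structure on a Cartan subalgebra $\lt$ and combine it with the standard complex structure on the sum $\mathfrak m$ of root spaces (acting as $\pm i$ on $\g_{\pm\alpha}$ for a fixed positive system). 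The resulting $J$ is left-invariant, integrable, and orthogonal with respect to $g$.

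The computational heart is the following claim: for a bi-invariant $g$ and any orthogonal integrable left-invariant $J$, the Bismut torsion is $\cB=-\phi$. I would prove this from the invariant identity $d\omega(X,Y,Z)=-\bigl(\omega([X,Y],Z)+\omega([Y,Z],X)+\omega([Z,X],Y)\bigr)$ together with $\cB=d\omega(J\cdot,J\cdot,J\cdot)$: orthogonality rewrites each $\omega([JX,JY],JZ)$ as $g([JX,JY],Z)$, and the vanishing of the Nijenhuis tensor lets me expand $[JX,JY]$. Using that $\phi$ is totally skew and $\ad$-invariant, the $J$-dependent terms cancel by a short cyclic-symmetry bookkeeping, leaving $\cB=-\phi$. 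Since $\phi$ is closed, $(g,J)$ is \SKT, which together with Samelson's $J$ settles the existence statement.

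For the stronger \emph{moreover} assertion I would invoke the classification of left-invariant integrable complex structures on compact groups (Samelson--Pittie): relative to a suitable maximal torus every such $J$ is the standard complex structure on the root part $\mathfrak m$ together with an \emph{arbitrary} linear complex structure on $\lt$, and in particular $J\lt=\lt$ and $J\mathfrak m=\mathfrak m$. Given such a $J$ I would choose the metric to be bi-invariant on $\mathfrak m$, so that $J|_{\mathfrak m}$ is automatically orthogonal, and to be any $J|_{\lt}$-compatible inner product on $\lt$, with $\lt\perp\mathfrak m$, and then re-examine the torsion of this Hermitian structure.

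The main obstacle is precisely this last step. A complex structure on the abelian factor $\lt$ need not be orthogonal for any bi-invariant metric -- already for $\su(3)$ a generic $J|_{\lt}$ fails to preserve the Killing form -- so one cannot simply quote $\cB=-\phi$, and the metric above is genuinely not bi-invariant. The hard part is therefore to recompute $\cB$ for this metric and verify $d\cB=0$ directly. I would organise the calculation by the bigrading coming from $\lt\oplus\mathfrak m$: since $\lt$ is abelian the only brackets are $[\lt,\mathfrak m]\subseteq\mathfrak m$ and $[\mathfrak m,\mathfrak m]\subseteq\lt\oplus\mathfrak m$, and I expect the components of $\cB$ with two or three legs along $\mathfrak m$ to behave exactly as in the bi-invariant case, while the components carrying a $\lt$-leg should close off using abelianness of $\lt$. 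Making this bigraded reduction rigorous, rather than the construction of $J$, is where the real work lies.
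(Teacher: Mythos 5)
Your existence argument is sound: for a bi-invariant metric $g$ and an orthogonal integrable left-invariant $J$, the computation you outline does give $\cB=-\phi$ (expanding $[JX,JY]$ via the vanishing Nijenhuis tensor and using total skewness of $\phi$, the $J$-dependent terms cancel), and since Samelson's construction leaves $J_\lt$ free, one may choose it orthogonal on $\lt$, proving the first assertion. The paper reaches the same conclusion by a shortcut worth knowing: the flat connection $\NB_XY=0$ of the left trivialization preserves \emph{every} left-invariant pair $(g,J)$ and has torsion $T(X,Y)=-[X,Y]$; when $g$ is $\ad$-invariant the lowered torsion $-g([X,Y],Z)$ is totally skew, so by uniqueness of the Hermitian connection with skew-symmetric torsion this flat connection \emph{is} the Bismut connection, and its torsion form is the closed Cartan three-form. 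Your identity $\cB=-\phi$ is then an immediate corollary, with no cyclic bookkeeping at all.

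The genuine gap is the \emph{moreover} assertion, and you have in effect conceded it. Your observation is correct that a general $J_\lt$ --- already on the two-dimensional Cartan subalgebra of $\su(3)$ --- is orthogonal for no $\ad$-invariant metric, so the identity $\cB=-\phi$ cannot simply be quoted. But your proposed repair (take $-B$ on $\mathfrak{m}$, any $J_\lt$-compatible inner product on $\lt$, with $\lt\perp\mathfrak{m}$, then verify $d\cB=0$ by a bigraded computation) is left entirely as an expectation: for such a metric the flat connection is no longer the Bismut connection, since its lowered torsion fails to be totally skew, so $\cB$ must genuinely be recomputed from $-Jd\omega$, and nothing in your sketch shows that the components in $\Lambda^3\mathfrak{m}^*$ and $\lt^*\wedge\Lambda^2\mathfrak{m}^*$ actually close up. As written, the second statement of the proposition is therefore not proved. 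It is worth noting that the paper's own proof is silent on exactly this point: it ``extends the negative of the Killing form on $[\g,\g]$ to a $J$-compatible positive definite inner product'' and then applies the flat-connection argument, which tacitly requires that extension to be $\ad$-invariant --- and in your $\su(3)$ example $[\g,\g]=\g$, so there is nothing to extend and the required compatibility can fail. You have thus put your finger on the weak point of the published argument, but identifying the difficulty is not the same as resolving it: a complete proof of the second assertion requires the bigraded verification (or some substitute for it) to be carried out, not merely announced.
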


\begin{proof}
  Let \( \lt^\bC \) be a Cartan subalgebra of \( \g^\bC \).
  By~\cite{Samelson:complex}, left-invariant complex structures~\( J \) on
  \( G \) are in one-to-one correspondence with pairs \( (J_{\lt},P) \),
  where \( J_\lt \) is any complex structure on \( \lt \) and \(
  P\subseteq\Delta \) is a system of positive roots: one defines
  \begin{equation}
    \g^{1,0} = \lt^{1,0}\oplus\bigoplus_{\alpha\in P}\g^\bC_\alpha.
  \end{equation}
  Extend the negative of the Killing form on \( [\g,\g] \) to a \( J
  \)-compatible positive definite inner product~\( g \) on~\( \g \).  The
  associated Levi-Civita connection on~\( G \) has \( \LC_XY =
  \tfrac12[X,Y] \), for \( X,Y\in\g \).  Consider now the left-invariant
  connection given by
  \begin{equation}
    \NB_XY = 0,\quad \text{for \( X,Y\in\g \)}.
  \end{equation}
  This connection preserves the metric \( g \) and the complex structure \(
  J \) and has torsion \( T^\NB(X,Y) = -[X,Y] \), so \(
  (T^\NB)^{\flat}(X,Y,Z) = -g([X,Y],Z) \), which is a closed three-form.
  Thus \( (G,g,J) \) is an \SKT manifold.
\end{proof}

\providecommand{\bysame}{\leavevmode\hbox to3em{\hrulefill}\thinspace}
\providecommand{\MR}{\relax\ifhmode\unskip\space\fi MR }
\providecommand{\MRhref}[2]{%
  \href{http://www.ams.org/mathscinet-getitem?mr=#1}{#2}
}
\providecommand{\href}[2]{#2}

\clearpage\begin{small}
  \parindent0pt\parskip\baselineskip

  T.B.Madsen \& A.F.Swann

  Department of Mathematics and Computer Science, University of Southern
  Denmark, Campusvej 55, DK-5230 Odense M, Denmark

  \textit{and}

  CP\textsuperscript3-Origins, Centre of Excellence for Particle Physics
  Phenomenology, University of Southern Denmark, Campusvej 55, DK-5230
  Odense M, Denmark.

  \textit{E-mail}: \url{tbmadsen@imada.sdu.dk}, \url{swann@imada.sdu.dk}
\end{small}

\end{document}